\documentclass{siamart}

\usepackage{amsfonts,amssymb,mathtools}

\usepackage{microtype}

\usepackage{xcolor}
\usepackage[percent]{overpic}

\definecolor{cgblue}{RGB}{0,114,178}
\definecolor{cgorange}{RGB}{213,94,0}

\newcommand{\curveanno}[2]{%
  \begingroup\setlength{\fboxsep}{1.0pt}%
  \colorbox{white}{\strut\scriptsize\color{#1}\ensuremath{#2}}%
  \endgroup}

\makeatletter
\AtBeginDocument{%
 \def\refstepcounter@optarg[#1]#2{%
 \cref@old@refstepcounter{#2}%
 \cref@constructprefix{#2}{\cref@result}%
 \@ifundefined{cref@#1@alias}%
 {\def\@tempa{#1}}%
 {\def\@tempa{\csname cref@#1@alias\endcsname}}%
 \protected@edef\cref@currentlabel{%
 [\@tempa][\arabic{#2}][\cref@result]%
 \csname p@#2\endcsname\csname the#2\endcsname}}%
}
\makeatother

\newcommand{\R}{\mathbb R}
\newcommand{\ip}[2]{\left\langle #1,#2\right\rangle}
\newcommand{\supp}{\operatorname{supp}}
\newcommand{\dist}{\operatorname{dist}}
\newcommand{\coeff}[2]{[x^{#1}]#2}
\newcommand{\norm}[1]{\lVert #1\rVert}

\newcommand{\Pe}{P_{\mathrm e}}
\newcommand{\Po}{P_{\mathrm o}}
\newcommand{\cC}{\mathcal W_{\mathrm{cyc}}}
\newcommand{\Ce}{\mathcal W_{\mathrm e}}
\newcommand{\Co}{\mathcal W_{\mathrm o}}
\newcommand{\Omegaey}{\Omega_{\mathrm e}^{y}}
\newcommand{\Omegaoy}{\Omega_{\mathrm o}^{y}}
\newcommand{\Omegaew}{\Omega_{\mathrm e}^{w}}
\newcommand{\Omegaow}{\Omega_{\mathrm o}^{w}}
\newcommand{\retE}{\operatorname{ret}_E}

\newcommand{\TheTitle}{A Proof of the Forsythe Conjecture for the Two-Step Restarted Conjugate Gradient Method}
\newcommand{\TheAuthors}{Matthew J. Colbrook, George Stepaniants, and Alex Townsend}
\newcommand{\TheShortAuthors}{M. J. Colbrook, G. Stepaniants, and A. Townsend}
\headers{Two-Step Restarted CG}{\TheShortAuthors}
\title{\TheTitle}
\author{
Matthew J. Colbrook\thanks{Department of Applied Mathematics and Theoretical Physics, University of Cambridge, Cambridge CB3 0WA, UK (\email{m.colbrook@damtp.cam.ac.uk}).}
\and
George Stepaniants\thanks{Department of Computing and Mathematical Sciences, California Institute of Technology, Pasadena, CA 91125, USA (\email{gstepan@caltech.edu}).}
\and
Alex Townsend\thanks{Department of Mathematics, Cornell University, Ithaca, NY 14853, USA (\email{townsend@cornell.edu}).}
}

\ifpdf
\hypersetup{hidelinks,pdftitle={\TheTitle},pdfauthor={\TheAuthors}}
\fi

\begin{document}
\maketitle

\begin{abstract}
Forsythe conjectured in 1968 that the normalized residual directions of restarted conjugate gradients approach a two-cycle. We prove the conjecture for restart length two: unless the method terminates, its even and odd normalized residuals converge separately.
\end{abstract}

\begin{keywords}
conjugate gradient method, restarted Krylov method, Forsythe conjecture, orthogonal polynomials, asymptotic residual directions
\end{keywords}

\begin{AMS}
65F10, 42C05
\end{AMS}

\section{Introduction}

The conjugate gradient (CG) method is a standard Krylov method for symmetric positive definite linear systems~\cite{HestenesStiefel1952,LiesenStrakos2013}. Classical convergence theory controls errors and residual norms, but it does not determine the asymptotic behavior of residual directions. This distinction becomes important under restarting since the residual norm can tend to zero while the normalized residuals have their own behavior. Whether these residual directions approach a two-cycle has been a longstanding theoretical question about the dynamics of restarted CG. The analysis can be used to develop an extrapolated version for potentially faster convergence (see Section~\ref{sec:extrapolation}).

Forsythe's analysis first circulated as Stanford technical report CS-TR-67-61, dated April 13, 1967, and appeared in journal form in 1968~\cite{Forsythe1967,Forsythe1968}. He studied the $s$-dimensional optimum gradient method, equivalently CG restarted every $s$ steps~\cite{FaberLiesenTichy2023,Forsythe1968}. For $s=1$, this iteration is steepest descent. Forsythe and Motzkin conjectured its two-direction asymptotics in 1951, and Akaike proved the conjecture in 1959 for $s=1$~\cite{Akaike1959,Forsythe1968,ForsytheMotzkin1951}. Later work refined this asymptotic picture in finite and Hilbert spaces and studied the associated gradient-norm oscillations~\cite{GonzagaSchneider2016,NocedalSartenaerZhu2002,PronzatoWynnZhigljavsky2001,PronzatoWynnZhigljavsky2006}; a broader review of spectral gradient dynamics appears in~\cite{ZouMagoules2022}. Guided by the $s=1$ theorem and by numerical evidence for $s=2$, Forsythe conjectured that, for every fixed restart length, either the iteration terminates or the even and odd normalized residuals converge separately. Their limits form an asymptotic two-cycle, even though the unnormalized residuals tend to zero.

The Russian original of Zhuk and Bondarenko's paper appeared in 1983, and its English translation appeared in 1984~\cite{ZhukBondarenko1984}. The paper explicitly claims the Forsythe conjecture for $s=2$ and proves convergence of the associated polynomial coefficients. Unfortunately, a particular part of the proof that takes coefficient convergence to a limiting residual direction invokes a statement attributed to Zabolotskaya~\cite[p.~238]{Zabolotskaya1979}. Faber, Liesen, and Tich\'y observed that the English translation states the needed implication only conditionally and does not prove it; they therefore continue to regard the two-step case as open~\cite[sec.~4.1, especially pp.~12--13]{FaberLiesenTichy2023}. Zhuk later proved the existence of limiting iteration parameters for the $s$-step method when minimizing a quadratic functional in Hilbert space~\cite{Zhuk1995}; this parameter-convergence result does not by itself establish convergence of residual directions for $s=2$. A complementary dynamical-system analysis, including a detailed numerical study of $s=2$, appears in~\cite{PronzatoWynnZhigljavsky2009}.

As recently as April 2026, an open-problems survey continued to list the Forsythe conjecture, including $s=2$, as open~\cite[sec.~2.7]{AmselEtAl2026}. To the best of our knowledge, the argument below gives the first self-contained proof of the $s=2$ directional-convergence statement, closing the gap identified by Faber, Liesen, and Tich\'y.

\subsection{Sketch of the proof}
In an eigenbasis of $A$, the squared coordinates of a normalized residual form a probability vector $w_k$, and each restart block is governed by a monic quadratic $P_k$. The energy identities in \cref{lem:energy-displacement} give $H_k\uparrow h$ and $\|y_{k+2}-y_k\|\to0$. At a subsequential limit, the product of the two limiting quadratics minus $h$ is a monic quartic vanishing on the active eigenvalues. Positive energy excludes fewer than three active nodes, while the quartic degree excludes more than four (\cref{prop:localization}). The double-orthogonality identity~\eqref{eq:double-orth} then turns energy increments into coefficient changes, yielding parity limits for $P_k$ and a common quartic limit $q_\infty$ (\cref{prop:global-lipschitz}).

If all limit points use three nodes, a support and its limiting quadratic determine at most one weight vector. Each parity limit set is therefore finite and, because it is connected, a singleton (\cref{prop:three-node}). If a four-node limit exists, $q_\infty(\lambda)=h$ selects a fixed four-node set $E$, and the compatible exact two-cycles form a closed one-parameter family with three-node endpoints (\cref{lem:edge-family}). Coefficient convergence shows that the trajectory approaches this family, but does not rule out drift along it.

To exclude such drift, we first show that the total weight outside $E$ decays geometrically. Indeed, every surviving external coordinate is contractive apart from the possible multiplier $-1$, which the logarithmic phase argument in \cref{prop:phase} excludes; see~\eqref{eq:external-decay}. After renormalizing on $E$, a scalar $\alpha$ parameterizes position along the family. The restricted-dynamics identity~\eqref{eq:face-motion} and sign transport in \cref{lem:factor-sharing} give its motion a fixed orientation, while the endpoint-uniform retraction estimate~\eqref{eq:retraction} adds only an exponentially small error. The perturbed-orientation argument in \cref{lem:perturbed-orientation} gives finite total variation of $\alpha$; \cref{prop:frame-convergence} then gives convergence of the weights, and \cref{thm:main} restores the coordinate signs.

Forsythe established energy monotonicity, parity asymptotic regularity, connected invariant limit sets, localization to supports of size $s+1$ through $2s$, and exact two-cycles on $s+1$ nodes~\cite[Thms.~3.3, 3.8, 4.7, and 4.8]{Forsythe1968}. Zhuk and Bondarenko proved qualitative convergence of the monic two-block quartic coefficients for $s=2$~\cite[pp.~429--432]{ZhukBondarenko1984}, and Pronzato, Wynn, and Zhigljavsky developed the probability-measure dynamics and three- and four-point invariant geometry~\cite[secs.~1.2--1.4]{PronzatoWynnZhigljavsky2009}. The remaining task is passage from these polynomial and limit-set facts to one limiting direction, especially endpoint-uniform control of drift. We give a self-contained proof to reconcile the different notations and supply that control.

Sections~2--4 develop the spectral maps, global estimates, and limiting geometry; Section~5 proves convergence; and Section~6 discusses extrapolation, longer restart lengths, and the role of the three-node endpoints.

\section{Two-step restarted CG and spectral dynamics}

The conjugate gradient method was developed by Hestenes and Stiefel, while Lanczos developed a closely related iteration at about the same time~\cite{GolubOLeary1989,HestenesStiefel1952,Lanczos1952}. In exact arithmetic, CG terminates in at most $d$ steps on a $d\times d$ matrix. Its later use as an iteration for large sparse systems (instead of a direct method) was brought to renewed prominence by Reid~\cite{Reid1971}; Golub and O'Leary describe Reid's paper as a catalyst for much of the subsequent work~\cite[p.~56]{GolubOLeary1989}. We first recall ordinary CG, then formulate fixed two-step restarting and derive the normalized spectral map analyzed throughout the paper.
\subsection{The ordinary conjugate gradient method}

Let $A\in\R^{d\times d}$ be symmetric positive definite, set $x_*=A^{-1}b$, and let $r_0=b-Ax_0$. After $m$ ordinary CG steps, $r_m=\pi_m(A)r_0$, where $\deg\pi_m\leq m$ and $\pi_m(0)=1$. Equivalently, CG has the variational characterization~\cite{HestenesStiefel1952,LiesenStrakos2013}
$$
x_m\in x_0+\mathcal K_m(A,r_0),\qquad
\norm{x_*-x_m}_A
=\min_{z\in x_0+\mathcal K_m(A,r_0)}\norm{x_*-z}_A,
$$
where $\mathcal K_m(A,r)=\operatorname{span}\{r,Ar,\ldots,A^{m-1}r\}$ and $\norm{u}_A^2=u^\top Au$.
This is the characterization needed for restarting: each block computes the best approximation in a fixed low-dimensional Krylov space.

\subsection{Two-step restarting}
Hestenes and Stiefel already observed that CG could be started anew from a current approximation, principally to limit the effects of roundoff~\cite[pp.~409--410]{HestenesStiefel1952}. The fixed-length iteration studied here is Forsythe's optimum $s$-gradient method in modern notation: each outer step computes the exact CG minimizer over an $s$-dimensional Krylov space and then restarts from the resulting residual~\cite{Forsythe1967,Forsythe1968}.

We use $k$ for the outer restart-block index, with approximation $x^{[k]}$ and residual $r_k=b-Ax^{[k]}$ at the start of block $k$. An $s$-step restarted cycle satisfies
\begin{equation}\label{eq:restarted-CG}
x^{[k+1]}\in x^{[k]}+\mathcal K_s(A,r_k),
\qquad
\norm{x_*-x^{[k+1]}}_A
=\min_{z\in x^{[k]}+\mathcal K_s(A,r_k)}
 \norm{x_*-z}_A.
\end{equation}
This is a modern formulation of Forsythe's $s$-dimensional optimum gradient method~\cite{FaberLiesenTichy2023,Forsythe1968}. For $s=2$, it means performing two ordinary CG steps from $x^{[k]}$ and restarting from the resulting residual; in particular, $x^{[k+1]}-x^{[k]}\in\operatorname{span}\{r_k,Ar_k\}$.

The restarted error contracts at least as much as one steepest-descent step per block. Since the trial space in~\eqref{eq:restarted-CG} contains the steepest-descent line, the classical Kantorovich estimate gives, unless termination occurs~\cite{Kantorovich1948,LiesenStrakos2013},
$$
\norm{x_*-x^{[k+1]}}_A
\leq\frac{\kappa(A)-1}{\kappa(A)+1}\norm{x_*-x^{[k]}}_A,
\qquad
\kappa(A)=\frac{\lambda_{\max}(A)}{\lambda_{\min}(A)}.
$$
Hence $x^{[k]}\to x_*$ and $r_k\to0$, although this says nothing by itself about the directions $y_k=r_k/\norm{r_k}$. Throughout, we consider the nonterminating case, so $r_k\neq0$ for every $k$.

Multiplying the incoming residual by a nonzero scalar leaves the CG coefficients unchanged and multiplies the outgoing residual by the same scalar. Thus, whenever a two-step block does not terminate, its normalized outgoing residual depends only on its normalized incoming residual. We denote this one-block map on unit directions by $F$: $F(y)$ is the outgoing residual normalized to unit length when the incoming direction is $y$. Along a nonterminating restarted orbit, $y_{k+1}=F(y_k)$; the explicit spectral formula appears in~\eqref{eq:F}.

\begin{theorem}[Forsythe's conjecture for restart length two]
\label{thm:main}
If two-step restarted CG does not terminate, then there are unit vectors $y_{\mathrm e}$ and $y_{\mathrm o}$ such that the normalized block-boundary residuals satisfy $y_{2n}\to y_{\mathrm e}$ and $y_{2n+1}\to y_{\mathrm o}$. Moreover, the two limits form a two-cycle for this normalized one-block map: $y_{\mathrm o}=F(y_{\mathrm e})$ and $y_{\mathrm e}=F(y_{\mathrm o})$.
\end{theorem}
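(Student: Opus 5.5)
We work throughout in an eigenbasis of $A$ with distinct eigenvalues $\lambda_1<\dots<\lambda_m$. Group coordinates of equal eigenvalues into a single node; the direction within each eigenspace is preserved by every block, since $r_{k+1}=P_k(A)r_k$ up to scaling, with $P_k$ the monic (after normalization) degree-two CG polynomial. Write $y_k$ for the normalized residual and $w_k=(w_{k,i})$ for the probability vector of squared spectral weights, $w_{k,i}=\|E_iy_k\|^2$ with $E_i$ the spectral projector. Then
\[
w_{k+1,i}=\frac{P_k(\lambda_i)^2\,w_{k,i}}{\sum_j P_k(\lambda_j)^2 w_{k,j}} .
\]
The plan is to prove convergence of $w_{2n}$ and $w_{2n+1}$ first, and then recover the signs.

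\textbf{Step 1: energy monotonicity and asymptotic regularity.} Define the energy $H_k$ as the ratio $\sum_iP_k(\lambda_i)^2w_{k,i}$, which is the block contraction factor squared up to normalization. We show, as Forsythe did, that $H_k$ is nondecreasing and bounded. The key identity is that $P_k$ also minimizes over the next block's Krylov space in a shifted sense (the double orthogonality $\langle P_k(A)y_k, A^j P_{k+1}(A)\cdots\rangle$). From this the increments $H_{k+1}-H_k$ dominate $\|y_{k+2}-y_k\|^2$. Hence $\sum_k\|y_{k+2}-y_k\|^2<\infty$, and the parity limit sets are connected, compact and invariant under $F\circ F$.

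\textbf{Step 2: localization and coefficient convergence.} At any limit point $w_*$ with limiting quadratics $P,Q$ for consecutive blocks, equality in the energy inequality forces $P(\lambda)Q(\lambda)-h=0$ on $\operatorname{supp} w_*$. This is a monic quartic, so the support has at most four nodes. Positivity of $h$ (no termination) excludes supports with at most two nodes, because a two-node support would make the CG block exact. Summability from Step 1 converts, via the orthogonality identity, into convergence of the coefficients of $P_{2n}$ and $P_{2n+1}$. Consequently the quartic $q_\infty=PQ$ is common to all limit points, and its level set $\{q_\infty=h\}$ meets the spectrum in a fixed set $E$ of size at most four.

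\textbf{Step 3: case analysis.} If $|E|=3$, or if every limit point has a three-node support, then the support and the known limiting quadratic determine the weights uniquely by solving the two orthogonality conditions. Each parity limit set is then finite and connected, hence a single point.

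If some limit point has four nodes, the exact two-cycles on $E$ compatible with $(P,Q)$ form a one-parameter arc $\alpha\mapsto w(\alpha)$ whose endpoints are three-node cycles. Here the main obstacle is \emph{drift along this arc}. We attack it in three parts:
\begin{itemize}
\item[(a)] Show the mass off $E$ decays geometrically. Each external node has multiplier $P(\lambda_i)^2Q(\lambda_i)^2/h^2\ne1$ except in the case $PQ(\lambda_i)=-h$. That case we exclude by tracking $\log|w_{k,i}|$: a persistent $-1$ multiplier contradicts energy monotonicity at second order.
\item[(b)] Renormalize the weights to $E$ and compute the exact restricted one-step motion of the parameter $\alpha$. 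Sign information from the factor structure of $P$ and $Q$ should show that $\alpha_{2n+2}-\alpha_{2n}$ has a fixed sign up to an error $O(\rho^n)$, uniformly even near the endpoints.
\item[(c)] A monotone sequence plus a summable perturbation has finite total variation, so $\alpha$ converges, and hence $w_{2n}$ converges.
\end{itemize}

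\textbf{Step 4: signs and the two-cycle.} The sign of $E_iy_k$ changes by $\operatorname{sign}P_k(\lambda_i)$ at each block. The limiting $P$ and $Q$ have no zeros at nodes carrying positive limiting weight (otherwise that weight would vanish), so the signs stabilize along each parity. Together with convergence of the weights and of the eigenspace directions, this gives $y_{2n}\to y_{\mathrm e}$ and $y_{2n+1}\to y_{\mathrm o}$. Finally, $F$ is continuous at nonterminating directions, so passing to the limit in $y_{2n+1}=F(y_{2n})$ and $y_{2n+2}=F(y_{2n+1})$ yields the two-cycle relations.
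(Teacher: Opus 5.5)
Your outline follows the paper's architecture closely: energy and two-step regularity, localization to a fixed set $E$ with $|E|\le4$, coefficient convergence, uniqueness plus connectedness on three nodes, and, in the four-node case, external decay followed by control of drift along the $\alpha$-family. But the two steps that carry the real difficulty are not supplied, and the first one fails as you describe it.

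\textbf{Step 3(a).} Tracking $\log w_{2n,\lambda}$ alone for an external node with $q_\infty(\lambda)=-h$ gives no definite sign. Write $q_{2n}(\lambda)=-h+\delta_n$ with $\delta_n=(q_{2n}-q_\infty)(\lambda)$. The two-block increment is then $-\log\bigl(H_{2n}H_{2n+1}/h^2\bigr)+2\log(1-\delta_n/h)$. The energy term is positive but only of size $h-H_{2n}$. The best available information on $\delta_n$ is $|\delta_n|\le C(h-H_{2n})$, with no sign. So the two terms are of the same order, and nothing contradicts $w_{2n,\lambda}\to0$.

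The paper instead adds the frame logarithms with degree-three Lagrange coefficients: $J_{\lambda,n}=\log w_{2n,\lambda}+\sum_i\Lambda_i(\lambda)\log w_{2n,e_i}$, where $\Lambda_i(\lambda)=R(\lambda)/((\lambda-e_i)R'(e_i))$. Because $\varepsilon_n=q_{2n}-R-H_{2n+1}$ is a cubic, interpolation cancels its first-order contribution except for an extra factor $h-H_{2n+1}$. This gives $J_{\lambda,n+1}-J_{\lambda,n}\ge2(h-H_{2n})/h-C(h-H_{2n})^2\ge0$ for large $n$. That contradicts $J_{\lambda,n}\to-\infty$ along a subsequence approaching an interior four-node limit, where the frame weights stay bounded below.

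This argument requires the quantitative tail bound $\norm{q_k-q_\infty}\le C(h-H_k)$. That in turn requires $\norm{P_{k+2}-P_k}\le C(H_{k+1}-H_k)$, obtained from the double-orthogonality identity and uniform invertibility of the late Gram matrices. In other words, coefficient changes must be bounded linearly by the energy increments, that is, quadratically in $\norm{y_{k+2}-y_k}$. Your appeal to ``summability from Step 1'' does not provide this.

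\textbf{Step 3(b).} This is the other crux, and you only assert it. On $\mathcal S_E$, with $c_k=\coeff{3}{(q_k-R)}$, there is the exact identity $\alpha_{k+1}-\alpha_k=c_kR(\alpha_k)/H_{k+1}$. Since $R$ keeps one sign on $I=[e_j,e_{j+1}]$, the orientation of the drift is the sign of $c_k$. That sign is controlled by the transport identity $c_{k+1}L(\alpha_{k+1},P_{k+1})=c_kL(\alpha_k,P_{k+1})$. You get it by evaluating $q_{k+1}-q_k=P_{k+1}(P_{k+2}-P_k)$ at the two roots of the shared factor $P_{k+1}$. You also need $L(\alpha,\Po)=\Pe(\alpha)$ and $L(\alpha,\Pe)=\Po(\alpha)$, which do not vanish on $I$.

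Even then, the true orbit is only a pseudo-orbit of the restricted dynamics after retraction onto $\mathcal S_E$, so $c_n$ may still change sign. You need two further ingredients:
\begin{itemize}
\item an argument that infinitely many sign changes, with a positive multiplier tending to one and exponentially small errors, force $|c_n|\le C\theta^n$;
\item the retraction bound $\norm{\retE T^2(w)-T^2(\retE w)}_1\le CU(w)$, uniformly up to the three-node endpoints. The paper gets this from a $C^1$ extension of $\alpha$ that never divides by an individual weight.
\end{itemize}

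\textbf{Step 4.} Nonvanishing of the limiting factors is not enough for the signs to stabilize along a parity. What you need is $\Pe(\lambda_i)\Po(\lambda_i)=h>0$ on the limiting support; Step 2 supplies this, but you should invoke it.
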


\Cref{fig:forsythe-conjecture} illustrates the theorem for a $10\times10$ diagonal matrix: the projected even and odd trajectories approach distinct points, and both full-space subsequences converge. Forsythe reported similar $s=2$ experiments, but did not specify the matrices or initial vectors~\cite[p.~66]{Forsythe1968}.

\begin{figure}[t]
  \centering
  \begin{minipage}[t]{0.485\textwidth}
    \vspace{0pt}\centering
    \begin{overpic}[width=\linewidth]{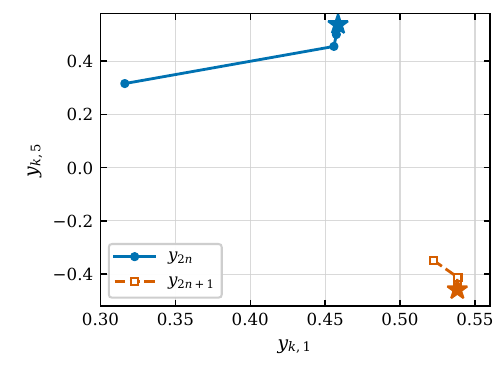}
      \put(72,69){\makebox(0,0)[l]{\curveanno{cgblue}{y_{\mathrm e}}}}
      \put(78,20){\makebox(0,0)[l]{\curveanno{cgorange}{y_{\mathrm o}}}}
    \end{overpic}
  \end{minipage}\hfill
  \begin{minipage}[t]{0.485\textwidth}
    \vspace{0pt}\centering
    \begin{overpic}[width=\linewidth]{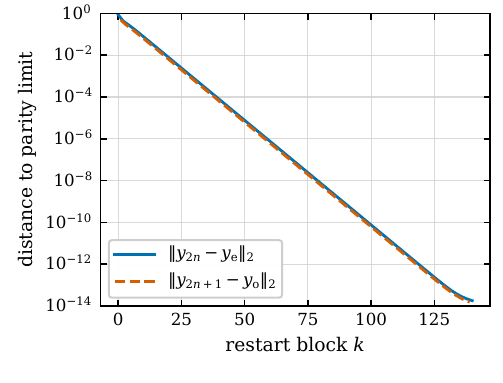}
    \end{overpic}
  \end{minipage}
  \caption{Two-step restarted CG for $A=\operatorname{diag}(1,\ldots,10)$
  with equal initial spectral weights. Left: the even and odd trajectories
  projected onto the $(y_{k,1},y_{k,5})$-plane; the computation uses all ten
  coordinates. Right: full-space distances to the final computed even and
  odd states decay to roundoff, illustrating \cref{thm:main}.}
  \label{fig:forsythe-conjecture}
\end{figure}

\subsection{Spectral coordinates and active eigenvalues}

Working in the fixed spectral decomposition of $A$ makes each restart block act coordinatewise. The coordinate signs are needed to recover the residual direction, but their squares form a probability vector and already determine the CG polynomial. We therefore first study the squared spectral weights and restore the signs at the end.

Let $N\leq d$ be the number of distinct eigenvalues whose spectral projections of $r_0$ are nonzero, and list them as $0<\lambda_1<\cdots<\lambda_N$. We call this an $N$-node state because the spectral dynamics has one coordinate for each of these $N$ active eigenspaces. Let $\Pi_i$ be the spectral projection associated with $\lambda_i$, and define $v_i=\Pi_i r_0/\|\Pi_i r_0\|$. The $v_i$ are fixed orthonormal eigenvectors in the active directions. Since every residual is a polynomial in $A$ applied to $r_0$, its $\lambda_i$-component remains in the span of $v_i$; hence $y_k=\sum_{i=1}^{N}y_{k,i}v_i$. The numbers $w_{k,i}=y_{k,i}^2$ form a probability vector $w_k=(w_{k,1},\dots,w_{k,N})$. For $\lambda=\lambda_i$, we also use $y_{k,\lambda}:=y_{k,i}$ and $w_{k,\lambda}:=w_{k,i}$, with the same convention when the time index is absent. We identify supports with their spectral nodes: $\supp y_k=\supp w_k:=\{\lambda_i:w_{k,i}>0\}$. This spectral probability-measure formulation goes back to Akaike for $s=1$ and was developed for the optimum $s$-gradient iteration by Forsythe and by Pronzato, Wynn, and Zhigljavsky~\cite{Akaike1959,Forsythe1968,PronzatoWynnZhigljavsky2009}.

\subsection{The monic two-step CG polynomial}

Over one two-step block, the outgoing residual has the form $\pi(A)r$, where $\pi$ is the CG residual polynomial, $\deg\pi\leq2$, and $\pi(0)=1$. Because the outgoing residual is normalized, only the polynomial up to an overall scalar matters, except for the sign of that scalar. We choose its monic representative $P_w$. An argument below proves that $P_w(0)>0$, so this change of normalization does not reverse the residual direction. The representation of CG residuals by orthogonal polynomials is classical~\cite[secs.~14--15]{HestenesStiefel1952} and was developed systematically by Stiefel~\cite{Stiefel1958}; Forsythe used the same representation for the optimum $s$-gradient dynamics~\cite[secs.~2--3]{Forsythe1968}.

We now use the standing nontermination assumption. If the incoming residual has components in at most two distinct eigenspaces, its relative minimal polynomial has degree at most two, and exact CG makes the residual zero during that block~\cite{HestenesStiefel1952}. Because no block terminates, every $w_k$ therefore has at least three active nodes~\cite[Thm.~5.1]{Forsythe1968}~\cite[Thm.~2]{PronzatoWynnZhigljavsky2009}. We will nevertheless encounter boundary states in compactness arguments, so it is convenient to define the associated monic orthogonal quadratic for every probability vector with at least two active directions.

For a probability vector $w$ supported on at least two distinct nodes, let $P_w(x)=x^2+a(w)x+b(w)$ be the monic quadratic orthogonal to $1$ and $x$ for the discrete weighted inner product $\langle f,g\rangle_w:=\sum_iw_i f(\lambda_i)g(\lambda_i)$. We use $\|f\|_w^2:=\langle f,f\rangle_w$ for the corresponding weighted polynomial norm. Equivalently, $P_w$ is the degree-two monic orthogonal polynomial for the discrete spectral measure $\sum_iw_i\delta_{\lambda_i}$ and we can write~\cite[sec.~2]{Forsythe1968}~\cite[eqs.~(1.11)--(1.13)]{PronzatoWynnZhigljavsky2009}
\begin{equation}
\sum_iw_iP_w(\lambda_i)=0,
\qquad
\sum_iw_i\lambda_iP_w(\lambda_i)=0.
\label{eq:orthogonality}
\end{equation}
Define $H(w):=\sum_iw_iP_w(\lambda_i)^2$ and, whenever $H(w)>0$, $(Tw)_i:=w_iP_w(\lambda_i)^2/H(w)$.
For a signed unit vector $y=(y_i)$, set $w_i=y_i^2$; whenever $H(w)>0$, define
\begin{equation}
F(y)_i:=\frac{P_w(\lambda_i)y_i}{\sqrt{H(w)}}.
\label{eq:F}
\end{equation}
We use $P_y=P_w$ and $H(y)=H(w)$.
Here $H(w)=\|P_w(A)y\|^2$ is the squared norm of the filtered direction. Thus $T$ evolves the squared spectral weights, while $F$ retains their signs; in particular, $F(y)_i^2=(Tw)_i$.

Existence and uniqueness are standard consequences of the moment formulation~\cite{GolubMeurant2010,LiesenStrakos2013}. We include the short argument because the same Gram matrix controls continuity and uniform invertibility later, and because the least-squares identity is used repeatedly.
For $P_w=x^2+ax+b$, the two conditions in~\eqref{eq:orthogonality} form a linear system for $(b,a)$ with Gram matrix
$$
\Gamma(w):=
\begin{pmatrix}
1&\sum_iw_i\lambda_i\\
\sum_iw_i\lambda_i&\sum_iw_i\lambda_i^2
\end{pmatrix}.
$$
For $z=(u,v)^\top\neq0$, $z^\top\Gamma(w)z=\sum_iw_i(u+v\lambda_i)^2>0$ whenever the support contains at least two distinct nodes. Thus $\Gamma(w)$ is positive definite, so $P_w$ exists and is unique. Orthogonality also gives, for every polynomial $\ell$ with $\deg\ell\leq1$,
\begin{equation}\label{eq:least-squares}
\norm{P_w+\ell}_w^2=H(w)+\norm{\ell}_w^2.
\end{equation}
Hence $P_w$ uniquely minimizes $\sum_iw_iQ(\lambda_i)^2$ over monic quadratics $Q$. Moreover, $H(w)=0$ exactly when $P_w$ vanishes on every active node. On exactly two nodes, $P_w$ is the monic polynomial vanishing at those nodes, so $H(w)=0$; on at least three nodes, a nonzero quadratic cannot vanish on the whole support, so $H(w)>0$. A one-node state terminates after one exact CG step. Consequently, the nontermination hypothesis gives $H(w_k)>0$ for every $k$, so $T$ and $F$ are defined along the entire orbit and any spectral coordinate that vanishes remains zero under both maps.

It remains to fix the sign relating the monic polynomial to the usual residual polynomial. If $\supp w=\{a,b\}$ with $0<a<b$, then the preceding uniqueness argument gives $P_w(x)=(x-a)(x-b)$, and hence $P_w(0)=ab>0$. If $w$ has at least three active nodes, the standard zero-location theorem for orthogonal polynomials associated with a positive measure places the two distinct roots of $P_w$ in the open interval spanned by the support~\cite[sec.~3.3]{Szego1975}. That interval is positive, so again $P_w(0)>0$.

We can now identify the abstract maps $F$ and $T$ with one restarted CG block.
The residual polynomial $\pi_k$ of block $k$ satisfies $\pi_k(0)=1$ and, by the Galerkin condition, $\langle\pi_k,1\rangle_{w_k}=\langle\pi_k,x\rangle_{w_k}=0$. If $\deg\pi_k\leq1$, write $\pi_k=u+vx$. The two Galerkin equations then read $\Gamma(w_k)(u,v)^\top=0$, so positive definiteness forces $\pi_k=0$, contradicting $\pi_k(0)=1$. Hence $\deg\pi_k=2$, and uniqueness gives $\pi_k=P_k/P_k(0)$, where $P_k:=P_{w_k}$. Since $P_k(0)>0$, normalizing the next residual removes this positive scalar, so $y_{k+1}=F(y_k)$ and $w_{k+1}=Tw_k$.

\subsection{The two-block recurrence}

The conjecture concerns even and odd subsequences, so it is natural to compose two consecutive restart filters. For brevity, let $H_k:=H(w_k)=\|P_k(A)y_k\|^2$ and $q_k:=P_kP_{k+1}$. Thus $P_k$ and $q_k$ are the one- and two-block spectral filters, respectively, while $\sqrt{H_k}$ is the one-block normalization factor. Equation~\eqref{eq:F} gives
\begin{equation}\label{eq:signed-ratio}
y_{k+2,i}
=\frac{q_k(\lambda_i)}{\sqrt{H_kH_{k+1}}}y_{k,i}.
\end{equation}
When a single state $w$ is under consideration, we suppress the time index and write $w^+:=Tw$, $P:=P_w$, $P^+:=P_{w^+}=P_{Tw}$, and $q:=PP^+$. Thus a superscript ${}^+$ means one application of $T$, while a subscript $\infty$, as in $q_\infty$, denotes a limiting object.

\section{Global asymptotics and localization}

We now extract the information shared by the even and odd sequences. The energy first identifies all subsequential limits as low-support two-cycles. Double orthogonality then turns the fact that the nonnegative energy increments have finite total sum into convergence of the polynomial coefficients, which places every limiting support in one fixed set of at most four eigenvalues.

\subsection{Energy, displacement, compactness, and localization}

The normalization $H_k$ controls the long-time dynamics: its increment measures the two-block defect and the displacement of the signed direction, yielding monotonicity and two-step asymptotic regularity. Compactness and continuity then localize every $\omega$-limit point to a three- or four-node two-cycle. This combination of monotone energy and two-step asymptotic regularity is due to Forsythe~\cite[Thm.~3.3]{Forsythe1968}; see also~\cite[Thm.~6]{PronzatoWynnZhigljavsky2009}. We record the exact identity in our monic normalization because its telescoping defect is used later.

\begin{lemma}\label{lem:energy-displacement}
For every nonterminating iterate,
\begin{align}
H_{k+1}-H_k
&=\frac{1}{H_{k+1}}\sum_iw_{k,i}\bigl(q_k(\lambda_i)-H_{k+1}\bigr)^2,
\label{eq:variance}\\
\sqrt{H_{k+1}}-\sqrt{H_k}
&=\frac{\sqrt{H_{k+1}}}{2}\|y_{k+2}-y_k\|^2.
\label{eq:displacement}
\end{align}
Consequently, with $d_k:=H_{k+1}-H_k$, $H_k$ tends to $h$ from below for some $h>0$, $\sum_{k=0}^{\infty}d_k<\infty$, and $\sum_{k=0}^{\infty}\|y_{k+2}-y_k\|^2<\infty$.
\end{lemma}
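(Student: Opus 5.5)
The plan is to derive both identities from two moment computations over the state $w_k$, using only the orthogonality relations~\eqref{eq:orthogonality} and the least-squares identity~\eqref{eq:least-squares}. The consequences then follow by monotonicity and telescoping.

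\emph{Step 1: two moments of $q_k$.} By~\eqref{eq:F}, $w_{k+1,i}=w_{k,i}P_k(\lambda_i)^2/H_k$. Hence
$$
H_{k+1}=\sum_iw_{k+1,i}P_{k+1}(\lambda_i)^2=\frac{1}{H_k}\sum_iw_{k,i}q_k(\lambda_i)^2,
$$
so $\sum_iw_{k,i}q_k(\lambda_i)^2=H_kH_{k+1}$. For the first moment, note that $P_{k+1}-P_k$ is a difference of monic quadratics and so has degree at most one. By~\eqref{eq:orthogonality} it is $\langle\cdot,\cdot\rangle_{w_k}$-orthogonal to $P_k$. Therefore
$$
\sum_iw_{k,i}q_k(\lambda_i)=\langle P_k,P_k+(P_{k+1}-P_k)\rangle_{w_k}=H_k.
$$
I expect this ``double orthogonality'' observation to be the only nonroutine point. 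Everything else is bookkeeping.

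\emph{Step 2: the identities.} Expand the square in~\eqref{eq:variance} and use $\sum_iw_{k,i}=1$:
$$
\sum_iw_{k,i}(q_k(\lambda_i)-H_{k+1})^2=H_kH_{k+1}-2H_{k+1}H_k+H_{k+1}^2=H_{k+1}(H_{k+1}-H_k).
$$
Dividing by $H_{k+1}>0$ gives~\eqref{eq:variance}. For~\eqref{eq:displacement}, use that $y_k$ and $y_{k+2}$ are unit vectors together with~\eqref{eq:signed-ratio} and $y_{k,i}^2=w_{k,i}$:
$$
\|y_{k+2}-y_k\|^2=2-2\langle y_{k+2},y_k\rangle=2-\frac{2H_k}{\sqrt{H_kH_{k+1}}}=2-2\sqrt{H_k/H_{k+1}}.
$$
Multiplying by $\sqrt{H_{k+1}}/2$ yields $\sqrt{H_{k+1}}-\sqrt{H_k}$.

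\emph{Step 3: consequences.} By~\eqref{eq:variance}, $d_k\ge0$, so $H_k$ is nondecreasing. The sequence is bounded above uniformly: by the minimizing property following~\eqref{eq:least-squares}, comparing with the monic quadratic $Q(x)=x^2$ gives $H_k\le\sum_iw_{k,i}\lambda_i^4\le\lambda_N^4$. Hence $H_k\uparrow h$ for some finite $h$, and $h\ge H_0>0$ by nontermination. The sum telescopes: $\sum_kd_k=h-H_0<\infty$. Finally, from~\eqref{eq:displacement},
$$
\|y_{k+2}-y_k\|^2=\frac{2(\sqrt{H_{k+1}}-\sqrt{H_k})}{\sqrt{H_{k+1}}}\le\frac{2}{\sqrt{H_0}}\bigl(\sqrt{H_{k+1}}-\sqrt{H_k}\bigr).
$$
This also telescopes, giving $\sum_k\|y_{k+2}-y_k\|^2\le2(\sqrt h-\sqrt{H_0})/\sqrt{H_0}<\infty$.
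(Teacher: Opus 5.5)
Your proposal is correct and follows essentially the same route as the paper. You use the two moments $\sum_i w_{k,i}q_k(\lambda_i)=H_k$ (from $P_k\perp P_{k+1}-P_k$ in $\langle\cdot,\cdot\rangle_{w_k}$) and $\sum_i w_{k,i}q_k(\lambda_i)^2=H_kH_{k+1}$ (from the weight update), the inner product $\langle y_{k+2},y_k\rangle=\sqrt{H_k/H_{k+1}}$ via~\eqref{eq:signed-ratio}, and the bound $H_k\le\lambda_N^4$ from the test polynomial $x^2$, followed by telescoping exactly as the paper does.
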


\begin{proof}
Since $P_{k+1}-P_k$ has degree at most one, orthogonality gives $\langle P_k,P_{k+1}-P_k\rangle_{w_k}=0$. Hence $\langle P_k,P_{k+1}\rangle_{w_k}=H_k$ and $\sum_iw_{k,i}q_k(\lambda_i)=H_k$. The weight update also gives $\sum_iw_{k,i}q_k(\lambda_i)^2=H_k\sum_iw_{k+1,i}P_{k+1}(\lambda_i)^2=H_kH_{k+1}$. Expanding the square in~\eqref{eq:variance} proves the identity.

Equation~\eqref{eq:signed-ratio} and $\sum_iw_{k,i}q_k(\lambda_i)=H_k$ give $\ip{F^2(y_k)}{y_k}=\sqrt{H_k/H_{k+1}}$. Since both vectors are unit vectors, $\|F^2(y_k)-y_k\|^2=2-2\sqrt{H_k/H_{k+1}}$, which rearranges to~\eqref{eq:displacement}.

Choosing the monic quadratic $x^2$ in the least-squares characterization gives $0<H_k\le\sum_iw_{k,i}\lambda_i^4\le\lambda_N^4$. Equation~\eqref{eq:variance} makes $H_k$ nondecreasing; hence $H_k\to h\ge H_0>0$, and $\sum_kd_k$ telescopes. Finally,~\eqref{eq:displacement} and $H_{k+1}\ge H_0$ give $\sum_{k=0}^{M}\|y_{k+2}-y_k\|^2\le2(\sqrt{H_{M+1}}-\sqrt{H_0})/\sqrt{H_0}$, uniformly in $M$.
\end{proof}

Although $\sum_k\|y_{k+2}-y_k\|^2<\infty$, this does not imply $\sum_k\|y_{k+2}-y_k\|<\infty$. Thus two-step asymptotic regularity alone does not imply convergence, and we must study subsequential limits.

For a sequence $(x_n)$ in a metric space, write $\omega(x_n)$ for the set of its subsequential limits, equivalently $\bigcap_{m\ge0}\overline{\{x_n:n\ge m\}}$. For the signed directions and squared weights, respectively, set $\Omegaey=\omega(y_{2n})$, $\Omegaoy=\omega(y_{2n+1})$, $\Omegaew=\omega(w_{2n})$, and $\Omegaow=\omega(w_{2n+1})$. Then
$$
\Omegaew=\bigl\{(y_i^2)_{i=1}^N:y\in\Omegaey\bigr\},
\qquad
\Omegaow=\bigl\{(y_i^2)_{i=1}^N:y\in\Omegaoy\bigr\}.
$$
Indeed, coordinatewise squaring is continuous, so it maps each signed limit set into the corresponding weight limit set. Conversely, if $w_{2n_j}\to w^*$, compactness of the unit sphere gives a further subsequence with $y_{2n_j}\to y^*$, and then $(y_i^*)^2=w_i^*$. The odd case is identical.

We use two elementary compactness observations. First, a sequence in a compact metric space eventually enters every open neighborhood of its $\omega$-limit set; otherwise a subsequence outside that neighborhood would have an $\omega$-limit point there. Second, if additionally $\dist(x_{n+1},x_n)\to0$, then $\omega(x_n)$ is nonempty, compact, and connected. The second assertion is the classical connected-cluster-set theorem of Ostrowski~\cite[Thm.~28.1]{Ostrowski1966}; we include the short compact-metric proof. If the limit set split into two nonempty compact sets at positive distance, then once the jumps were smaller than that distance, every passage between neighborhoods of the two sets would contain a point outside both; a convergent subsequence of such passage points would give an $\omega$-limit point outside the alleged union. Applying this observation separately to the even and odd subsequences of signed directions shows that $\Omegaey$ and $\Omegaoy$ are compact and connected, and continuity of coordinatewise squaring gives the same conclusion for $\Omegaew$ and $\Omegaow$.

We also record the continuity needed below. If $w^*$ has at least three positive coordinates, those coordinates remain positive in a sufficiently small relative neighborhood. The determinant of $\Gamma(w)$ therefore stays positive, so inversion of the moment system shows that $w\mapsto P_w$ is continuous there. It follows that $H$ is continuous and bounded away from zero nearby, and the formulas defining $T$ and $F$ make both maps continuous at the corresponding weight and signed states.

Positive energy and the least-squares characterization exclude terminating supports; two-step asymptotic regularity and continuity then make every remaining limit a genuine two-cycle. The result is the $s=2$ specialization of Forsythe's limit-set and support theorems~\cite[Thms.~3.8 and 4.7]{Forsythe1968}; modern formulations appear in~\cite[Thms.~3 and 5]{FaberLiesenTichy2023} and~\cite[Thms.~7 and 8]{PronzatoWynnZhigljavsky2009}.

\begin{proposition}\label{prop:localization}
Every $\omega$-limit point $y^*$ of either the even or the odd subsequence satisfies $F^2(y^*)=y^*$. If $S=\supp y^*$, then $3\le |S|\le4$, $H(y^*)=H(F(y^*))=h$, and $P_{y^*}(\lambda_i)P_{F(y^*)}(\lambda_i)=h$ for $\lambda_i\in S$.
\end{proposition}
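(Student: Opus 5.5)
Let $y^*\in\Omegaey$ (the odd case is identical), with $y_{2n_j}\to y^*$ and $w^*=(y^*_i)^2$. The plan is to show first that $w^*$ has at least three active nodes, so the continuity of $P$, $H$, $T$, $F$ recorded above applies near $w^*$. Then I pass to the limit in the two-block identities of \cref{lem:energy-displacement}, and finally I count roots.

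\textbf{Step 1: at least three nodes.} This is the step I expect to need the most care, since continuity of $w\mapsto P_w$ is only available on three-node states. I would avoid continuity and use the least-squares characterization~\eqref{eq:least-squares} directly.
\begin{itemize}
\item Suppose $|\supp w^*|\le2$. Let $Q$ be the monic quadratic vanishing on $\supp w^*$; if there is a single node $\lambda$, take $Q=(x-\lambda)^2$.
\item Then $H(w_{2n_j})\le\sum_i w_{2n_j,i}Q(\lambda_i)^2\to\sum_i w^*_iQ(\lambda_i)^2=0$.
\item This contradicts $H_k\ge H_0>0$ from \cref{lem:energy-displacement}.
\end{itemize}
Hence $|S|\ge3$. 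Continuity then gives $H(w^*)=\lim H_{2n_j}=h$ and $y_{2n_j+1}=F(y_{2n_j})\to F(y^*)$.

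\textbf{Step 2: $F(y^*)$ also has three nodes, and the two-cycle.} Since $F(y_{2n_j})\to F(y^*)$, the point $F(y^*)$ is an odd $\omega$-limit point. Step 1 applies to it, so it also has at least three nodes and $H(F(y^*))=h$. Continuity of $F$ at $F(y^*)$ gives $y_{2n_j+2}\to F^2(y^*)$. Since $\|y_{k+2}-y_k\|\to0$ by \cref{lem:energy-displacement}, $y_{2n_j+2}\to y^*$ as well, so $F^2(y^*)=y^*$.

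\textbf{Step 3: the product identity.} Pass to the limit in~\eqref{eq:variance} along $k=2n_j$.
\begin{itemize}
\item The left side $d_{2n_j}\to0$, since $\sum_k d_k<\infty$.
\item The right side converges to $h^{-1}\sum_i w^*_i\bigl(P_{y^*}(\lambda_i)P_{F(y^*)}(\lambda_i)-h\bigr)^2$, because $P_{w_{2n_j}}\to P_{w^*}$ and $P_{w_{2n_j+1}}\to P_{Tw^*}$ by continuity at three-node states.
\end{itemize}
Therefore $P_{y^*}(\lambda_i)P_{F(y^*)}(\lambda_i)=h$ for every $\lambda_i\in S$. As a check, the same conclusion follows from~\eqref{eq:signed-ratio} at the fixed point: $q(\lambda_i)/h=1$ wherever $y^*_i\ne0$.

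\textbf{Step 4: the upper bound.} The polynomial $P_{y^*}P_{F(y^*)}-h$ is a monic quartic. It is therefore nonzero, so it has at most four roots. By Step 3 it vanishes on $S$, hence $|S|\le4$. This completes $3\le|S|\le4$.
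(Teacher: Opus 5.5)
Your proposal is correct and follows the paper's proof essentially step for step: the least-squares exclusion of supports with at most two nodes, continuity of $F$ at $y^*$ and at $F(y^*)$ combined with $\|y_{k+2}-y_k\|\to0$ to obtain the two-cycle, and the root count for the monic quartic. The one difference is that you get the product identity by passing to the limit in~\eqref{eq:variance} (using $d_k\to0$), whereas the paper reads it off the fixed-point relation $y^*=F^2(y^*)$ via~\eqref{eq:F}, the route you list as a check; both are valid.
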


\begin{proof}
No point $w^*\in\Omegaew\cup\Omegaow$ can be supported on at most two nodes. Indeed, if $w_{k_j}\to w^*$ with $|\supp w^*|\le2$, choose a fixed monic quadratic $Q$ vanishing on $\supp w^*$. Equation~\eqref{eq:least-squares} gives $0\le H_{k_j}\le\sum_iw_{k_j,i}Q(\lambda_i)^2\to0$, contradicting $H_{k_j}\ge H_0>0$.

Now choose a convergent subsequence $y_{k_j}\to y^*$ whose indices are all even or all odd. By the continuity just established, $y_{k_j+1}=F(y_{k_j})\to F(y^*)$. This is an $\omega$-limit point of the other subsequence, so it also has at least three active nodes. Therefore $F$ is continuous at the intermediate point $F(y^*)$, and $y_{k_j+2}=F^2(y_{k_j})\to F^2(y^*)$. On the other hand, \cref{lem:energy-displacement} gives $\|y_{k_j+2}-y_{k_j}\|\to0$. Hence $F^2(y^*)=y^*$.

Continuity along the selected subsequence and its one-step image gives $H(y^*)=\lim_jH(y_{k_j})=h$ and $H(F(y^*))=\lim_jH(y_{k_j+1})=h$. Applying~\eqref{eq:F} twice and using $F^2(y^*)=y^*$ gives, for every active coordinate, $y_i^*=P_{y^*}(\lambda_i)P_{F(y^*)}(\lambda_i)y_i^*/h$. Canceling $y_i^*\neq0$ gives the product identity. The monic quartic $P_{y^*}P_{F(y^*)}-h$ vanishes on $S$, so $|S|\le4$.
\end{proof}

\subsection{Convergence of the quadratic factors and their products}

\Cref{prop:localization} describes each subsequential limit, but it does not rule out different subsequences approaching different two-cycles. To compare all such limits, we now prove that the monic restart polynomials have unique limits along the even and odd subsequences and that the two-block products $q_k=P_kP_{k+1}$ converge to a single quartic. The key is the following double-orthogonality identity. Together with uniform invertibility of the late moment Gram matrices, it bounds changes in the polynomial coefficients by the energy increments $d_k=H_{k+1}-H_k$. Since $\sum_kd_k=h-H_0<\infty$, these coefficient changes can be controlled after telescoping, without dividing by individual spectral weights, which may tend to zero. Set $\Delta_k(x):=q_k(x)-H_{k+1}$.

For every polynomial $\varphi$ of degree at most one, double orthogonality gives
\begin{equation}
\langle P_{k+2}-P_k,\varphi\rangle_{w_{k+2}}
=-\frac{1}{H_kH_{k+1}}
\langle P_k\varphi,\Delta_k^2\rangle_{w_k}.
\label{eq:double-orth}
\end{equation}
To verify this identity, use the orthogonality of $P_{k+2}$ to $\varphi$ for $w_{k+2}$ to write the left side as $-\langle P_k,\varphi\rangle_{w_{k+2}}$. By the two-block recurrence, this equals $-(H_kH_{k+1})^{-1}\langle P_k\varphi,(H_{k+1}+\Delta_k)^2\rangle_{w_k}$. The constant term vanishes because $P_k\perp\varphi$, and the linear term also vanishes because
\begin{align*}
\langle P_k\varphi,\Delta_k\rangle_{w_k}
&=\langle P_k\varphi,q_k\rangle_{w_k}-H_{k+1}\langle P_k,\varphi\rangle_{w_k}\\
&=\sum_iw_{k,i}P_k(\lambda_i)^2P_{k+1}(\lambda_i)\varphi(\lambda_i)
=H_k\langle P_{k+1},\varphi\rangle_{w_{k+1}}=0.
\end{align*}
Only the quadratic term remains, proving~\eqref{eq:double-orth}.

Qualitative convergence of the monic two-block quartic coefficients for $s=2$ was established by Zhuk and Bondarenko~\cite[pp.~429--432]{ZhukBondarenko1984}, and Zhuk later proved convergence of the iteration parameters along the even and odd subsequences for general $s$ in a Hilbert space~\cite[Thm.~2]{Zhuk1995}. We give the finite-dimensional estimate below because the quantitative tail bound in~\eqref{eq:q-limit-tail} is needed later in the phase analysis.

An unsubscripted $C>0$ denotes a finite constant, uniform in the variables under discussion, whose value may be enlarged from one estimate to the next. Within a single theorem or lemma statement, all appearances of $C$ refer to the same constant.

\begin{proposition}\label{prop:global-lipschitz}
There exist a constant $C<\infty$, monic quadratics $\Pe,\Po$, and a monic quartic $q_\infty$ such that
\begin{align}
\|P_{k+2}-P_k\|&\le Cd_k,
&\|q_{k+1}-q_k\|&\le Cd_k,
\label{eq:factor-bounds}\\
P_{2n}&\to\Pe,
&P_{2n+1}&\to\Po, \label{eq:even-odd-P}\\
q_k&\to q_\infty=\Pe\Po,
&\|q_k-q_\infty\|&\le C(h-H_k).
\label{eq:q-limit-tail}
\end{align}
The inequalities hold for all sufficiently large $k$. All polynomial norms are Euclidean norms of coefficient vectors.
\end{proposition}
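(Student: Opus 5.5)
The plan is to bound the coefficient change $P_{k+2}-P_k$ through the moment system for $w_{k+2}$, and then derive everything else by telescoping. Write $P_{k+2}-P_k=\beta+\gamma x$; this difference has degree at most one because both polynomials are monic. Apply \eqref{eq:double-orth} with $\varphi=1$ and $\varphi=x$. The left sides are $\Gamma(w_{k+2})(\beta,\gamma)^\top$, so
\[
\Gamma(w_{k+2})\begin{pmatrix}\beta\\ \gamma\end{pmatrix}=-\frac{1}{H_kH_{k+1}}\begin{pmatrix}\langle P_k,\Delta_k^2\rangle_{w_k}\\ \langle xP_k,\Delta_k^2\rangle_{w_k}\end{pmatrix}.
\]
The right side is controlled by $d_k$. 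Indeed, $|P_k(\lambda_i)|$ and $|\lambda_iP_k(\lambda_i)|$ are uniformly bounded, because the coefficients of $P_k$ are bounded: its roots lie in $[\lambda_1,\lambda_N]$. Also $\sum_iw_{k,i}\Delta_k(\lambda_i)^2=H_{k+1}d_k$ by \eqref{eq:variance}, and $H_k\ge H_0>0$. Together these give a bound $Cd_k$ on the right-hand side, with no division by individual weights.

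The step I expect to be the main obstacle is uniform invertibility of $\Gamma(w_{k+2})$ for large $k$. We have $\det\Gamma(w)=\sum_iw_i(\lambda_i-\bar\lambda)^2$, which is the variance of the spectral measure. It vanishes only when $w$ is supported on a single node, and it is a continuous function of $w$ on the simplex. By \cref{prop:localization} (and the first paragraph of its proof), every point of $\Omegaew\cup\Omegaow$ has at least three active nodes, so the variance is positive on this compact set. Since the sequence eventually enters any neighborhood of its $\omega$-limit sets, $\det\Gamma(w_k)\ge c>0$ for all large $k$. The entries of $\Gamma$ are bounded by $\max(1,\lambda_N^2)$, so $\|\Gamma(w_k)^{-1}\|\le C$. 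This gives $\|P_{k+2}-P_k\|\le Cd_k$.

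Next, $q_{k+1}-q_k=P_{k+1}(P_{k+2}-P_k)$. Coefficients of a product are bounded by products of coefficient norms, up to a constant, and $P_{k+1}$ is bounded, so $\|q_{k+1}-q_k\|\le Cd_k$. Summing, $\sum_k\|P_{k+2}-P_k\|\le C\sum d_k<\infty$, so both $P_{2n}$ and $P_{2n+1}$ are Cauchy and converge to monic quadratics $\Pe$ and $\Po$. Then $q_k=P_kP_{k+1}$ converges to $\Pe\Po=q_\infty$ along both parities, hence along the whole sequence. Finally, the tail bound follows by telescoping:
\[
\|q_k-q_\infty\|\le\sum_{j\ge k}\|q_{j+1}-q_j\|\le C\sum_{j\ge k}d_j=C(h-H_k).
\]
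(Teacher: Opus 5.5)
Your proposal is correct and follows essentially the paper's route. The double-orthogonality identity combined with the variance identity bounds $\langle P_{k+2}-P_k,\varphi\rangle_{w_{k+2}}$ by $Cd_k$. Compactness of the $\omega$-limit set, whose points all have at least three active nodes, gives uniform invertibility of $\Gamma(w_{k+2})$ for late $k$. The factorization $q_{k+1}-q_k=P_{k+1}(P_{k+2}-P_k)$ and telescoping then give the rest. You differ only in small ways, both sound: you bound the coefficients of $P_k$ by root location in $[\lambda_1,\lambda_N]$, valid for every $k$, rather than by the inverted moment system, and you control $\Gamma^{-1}$ through $\det\Gamma$ as a variance rather than through $\lambda_{\min}(\Gamma)$.
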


\begin{proof}
Let $\Omega^w:=\omega(w_k)=\Omegaew\cup\Omegaow$. By \cref{prop:localization}, every $w\in\Omega^w$ has at least three positive coordinates, and hence $\Gamma(w)$ is positive definite. The function $w\mapsto\lambda_{\min}(\Gamma(w))$ is continuous and strictly positive on the compact set $\Omega^w$. Therefore
$$
\gamma:=\min_{w\in\Omega^w}\lambda_{\min}(\Gamma(w))>0.
$$
The relative open set $\{w:\lambda_{\min}(\Gamma(w))>\gamma/2\}$ is a neighborhood of $\Omega^w$. By the compactness observation above, it contains $w_k$ for every sufficiently large $k$. If $P_k(x)=x^2+a_kx+b_k$, then
$$
\Gamma(w_k)\binom{b_k}{a_k}
=-\binom{\sum_iw_{k,i}\lambda_i^2}{\sum_iw_{k,i}\lambda_i^3}.
$$
The right-hand side is uniformly bounded because the spectral nodes are fixed, while $\norm{\Gamma(w_k)^{-1}}\leq2/\gamma$. Thus the coefficients of $P_k$ are uniformly bounded on the late tail.

Choose $k_0$ so that the preceding coefficient bound holds for $k\ge k_0$. Let $D_k=P_{k+2}-P_k$, so $\deg D_k\le1$, and define $B_P:=\max_{\varphi\in\{1,x\}}\sup_{\substack{k\ge k_0\\1\le i\le N}}|P_k(\lambda_i)\varphi(\lambda_i)|<\infty$.
Then~\eqref{eq:double-orth} and \cref{lem:energy-displacement} give, for $\varphi\in\{1,x\}$,
$$
|\langle D_k,\varphi\rangle_{w_{k+2}}|
\leq\frac{B_P}{H_kH_{k+1}}
 \sum_iw_{k,i}\Delta_k(\lambda_i)^2
=\frac{B_P}{H_k}d_k
\leq\frac{B_P}{H_0}d_k.
$$
Thus $\sum_{\varphi\in\{1,x\}}|\langle D_k,\varphi\rangle_{w_{k+2}}|\le Cd_k$. If $D_k(x)=D_{k,0}+D_{k,1}x$, then
$$
\binom{D_{k,0}}{D_{k,1}}
=\Gamma(w_{k+2})^{-1}
\binom{\langle D_k,1\rangle_{w_{k+2}}}{\langle D_k,x\rangle_{w_{k+2}}}.
$$
Uniform inversion proves the first bound in~\eqref{eq:factor-bounds}.

The exact factorization $q_{k+1}-q_k=P_{k+1}(P_{k+2}-P_k)$ and boundedness of $P_{k+1}$ prove the second bound in~\eqref{eq:factor-bounds}. Since $\sum_kd_k<\infty$, the even and odd quadratic sequences and the full quartic sequence are Cauchy in coefficient space, proving~\eqref{eq:even-odd-P} and the convergence assertion in~\eqref{eq:q-limit-tail}. Finally,
$$
\|q_k-q_\infty\|
\le\sum_{j=k}^{\infty}\|q_{j+1}-q_j\|
\le C\sum_{j=k}^{\infty}d_j
=C(h-H_k),
$$
which proves the remaining estimate in~\eqref{eq:q-limit-tail}.
\end{proof}

The coefficient limits now determine one fixed spectral set containing every limiting support. Define
\begin{equation}
E:=\{\lambda_i:q_\infty(\lambda_i)=h\}.
\label{eq:E}
\end{equation}
The support of every weight $\omega$-limit point is contained in $E$. To see this, choose $w^*\in\Omegaew\cup\Omegaow$ and a convergent subsequence $w_{k_j}\to w^*$ whose indices are all even or all odd. After passing to a further subsequence, compactness gives $y_{k_j}\to y^*$, where $(y_i^*)^2=w_i^*$. By continuity, $y_{k_j+1}\to F(y^*)$, and hence $P_{k_j}\to P_{y^*}$ and $P_{k_j+1}\to P_{F(y^*)}$. Thus, for every $\lambda_i\in\supp w^*=\supp y^*$,
$$
q_\infty(\lambda_i)
=\lim_{j\to\infty}q_{k_j}(\lambda_i)
=P_{y^*}(\lambda_i)P_{F(y^*)}(\lambda_i)
=h,
$$
where the last equality is \cref{prop:localization}. Hence $\supp w^*\subset E$. Since $q_\infty-h$ is a monic quartic, $|E|\leq4$.

\section{Limiting support geometry}

This section separates the two remaining cases: three-node limits and four-node limits. In the four-node case, the exact two-cycles form a one-parameter family whose endpoints have only three active nodes.

\subsection{Lagrange interpolation identities}

Here $S$ denotes a local support and $R_S(x)=\prod_{\xi\in S}(x-\xi)$, whereas $E$ is the fixed resonant set from~\eqref{eq:E}. Every limiting support lies in $E$; if a four-node limit exists, $E$ is the fixed frame and its endpoints have supports $E\setminus\{\mu\}$.

For distinct nodes $S=\{\xi_1,\ldots,\xi_m\}$, Lagrange interpolation and comparison of leading coefficients give
\begin{equation}
\sum_{\xi\in S}\frac{f(\xi)}{R_S'(\xi)}=\coeff{m-1}{f}
\qquad(\deg f\le m-1).
\label{eq:lagrange-coefficient}
\end{equation}
Thus the sum vanishes for $\deg f\le m-2$ and equals one for monic $f$ of degree $m-1$. In particular, every vector annihilating $1$ and $x$, meaning $\sum_\xi a_\xi=\sum_\xi\xi a_\xi=0$, has the form
\begin{equation}
a_\xi=\frac{\chi}{R_S'(\xi)} \qquad (|S|=3),
\label{eq:three-annihilator}
\end{equation}
or, for $E=\{e_1,e_2,e_3,e_4\}$ and $R=\prod_i(x-e_i)$,
\begin{equation}
a_i=\frac{\zeta_1e_i+\zeta_0}{R'(e_i)} \qquad (E=\{e_1,e_2,e_3,e_4\}),
\label{eq:four-annihilator}
\end{equation}
Indeed,~\eqref{eq:lagrange-coefficient} supplies the displayed spanning vectors and the nullspaces have dimensions one and two.

\subsection{The three-node case}

Exact two-block invariance on three active nodes is classical~\cite[Thm.~4.8]{Forsythe1968} and~\cite[Thm.~3]{PronzatoWynnZhigljavsky2009}. Section~1.4.1 of~\cite{PronzatoWynnZhigljavsky2009} also analyzes local uniqueness of a three-point limiting measure. Here the interpolation formula identifies the weights directly from the limiting factor. If every $\omega$-limit point has three active eigenvalues, then within each of the even and odd limit sets there is at most one weight vector for each support, and connectedness forces convergence.

\begin{proposition}\label{prop:three-node}
Let $w$ be a three-node $\omega$-limit point of either the even or the odd weight sequence, let $S=\supp w$, let $P$ be the corresponding limiting quadratic, and define $R_S(x)=\prod_{\xi\in S}(x-\xi)$. Then $P=P_w$, $H(w)=h$, and
\begin{equation}
w_\xi P(\xi)=\frac{h}{R_S'(\xi)}
\qquad(\xi\in S).
\label{eq:three-bary}
\end{equation}
In particular, $S$ and $P$ determine $w$ uniquely. If every point of $\Omegaew\cup\Omegaow$ has three active nodes, then both the even and odd weight sequences converge.
\end{proposition}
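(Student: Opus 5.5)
Here ``the corresponding limiting quadratic'' is $\Pe$ or $\Po$, according to the parity of the sequence in which $w$ is a limit point. The plan is to identify $P$ with $P_w$, then read off the weights from the orthogonality relations via the Lagrange identity~\eqref{eq:three-annihilator}, and finally use connectedness of the limit sets.

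\emph{Identification.} Take an even (say) subsequence $w_{2n_j}\to w$. Since $w$ has three positive coordinates, $w\mapsto P_w$ is continuous at $w$, so $P_{2n_j}\to P_w$. By \cref{prop:global-lipschitz}, $P_{2n}\to\Pe$, hence $P_w=\Pe=P$. \Cref{prop:localization} gives $H(w)=h$.

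\emph{Barycentric formula.} Set $a_\xi:=w_\xi P(\xi)$ for $\xi\in S$. The orthogonality relations~\eqref{eq:orthogonality} say exactly that $\sum_\xi a_\xi=\sum_\xi\xi a_\xi=0$. Since $|S|=3$, \eqref{eq:three-annihilator} gives $a_\xi=\chi/R_S'(\xi)$ for some scalar $\chi$.

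To find $\chi$, compute the energy. Write $P=R_S+\ell$ on $S$ with $\deg\ell\le2$; in fact, on $S$ we have $P(\xi)=P(\xi)-R_S(\xi)$, and $P-R_S$ has degree at most two with $x^2$-coefficient $\coeff{2}{P}-\coeff{2}{R_S}=1-(\text{coefficient})$. It is simpler to proceed directly:
\[
h=\sum_\xi w_\xi P(\xi)^2=\chi\sum_\xi \frac{P(\xi)}{R_S'(\xi)}.
\]
The polynomial $P$ is monic of degree $2=m-1$, so by~\eqref{eq:lagrange-coefficient} this sum equals $1$. Hence $\chi=h$, which proves~\eqref{eq:three-bary}.

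\emph{Uniqueness.} We have $P(\xi)\neq0$ on $S$, since otherwise $h/R_S'(\xi)=0$, which is impossible because $h>0$. Therefore $w_\xi=h/(R_S'(\xi)P(\xi))$ is determined by $S$ and $P$ (and $h$, which is fixed).

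\emph{Convergence.} Every point of $\Omegaew$ has a three-node support contained in the fixed set $E$, and $|E|\le4$, so there are at most four possible supports. For each support the weight vector is unique, because the quadratic is always $\Pe$. Hence $\Omegaew$ is finite. It is also nonempty and connected by the Ostrowski observation, so it is a singleton. Since the sequence lies in a compact set and has a unique limit point, it converges. The odd sequence is handled the same way with $\Po$.

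The main subtle point is that the limiting quadratic is the same for all limit points of a given parity. This is exactly what \cref{prop:global-lipschitz} supplies; the rest is routine.
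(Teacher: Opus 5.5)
Your proposal is correct and follows the paper's proof essentially step for step. You identify $P=P_w$ by continuity and \eqref{eq:even-odd-P}, obtain $w_\xi P(\xi)=\chi/R_S'(\xi)$ from \eqref{eq:three-annihilator}, fix $\chi=h$ by summing against $P(\xi)$ and applying \eqref{eq:lagrange-coefficient}, and conclude that each parity limit set is finite and connected, hence a singleton. The only blemish is the abandoned aside about writing $P=R_S+\ell$, which is unnecessary and should be deleted.
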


\begin{proof}
Choose a subsequence $w_{k_j}\to w$ from the relevant even or odd weight sequence. Continuity and~\eqref{eq:even-odd-P} give $P=P_w$ and $H(w)=h$. Orthogonality shows that $a_\xi=w_\xi P(\xi)$ annihilates $1$ and $x$, so~\eqref{eq:three-annihilator} gives $w_\xi P(\xi)=\chi/R_S'(\xi)$. Multiplying by $P(\xi)$ and summing, the left-hand side becomes $H(w)=h$. Since $P$ is monic quadratic,~\eqref{eq:lagrange-coefficient} gives $\sum_{\xi\in S}P(\xi)/R_S'(\xi)=1$, and hence $\chi=h$. Moreover, $h>0$, $R_S'(\xi)\neq0$, and $w_\xi>0$ imply $P(\xi)\neq0$, so~\eqref{eq:three-bary} determines every $w_\xi$.

If every limit point has three active nodes, then only finitely many supports can occur in either the even or the odd $\omega$-limit set, and each support determines at most one weight vector. Each limit set is therefore finite and, by the compactness argument above, connected; hence each is a singleton.
\end{proof}

\subsection{Parameterization of the four-node two-cycle family}

It remains to consider the case in which an $\omega$-limit point has four active eigenvalues. Since every limiting support is contained in $E$ and $|E|\leq4$, the set $E$ then consists of precisely four nodes. Zhuk constructed two-step direction-invariant states on 3 and 4 nodes and gave a sufficient local-attraction condition~\cite{Zhuk1982}. Pronzato, Wynn, and Zhigljavsky identified the one-dimensional four-support invariant geometry~\cite[secs.~1.4.1--1.4.2]{PronzatoWynnZhigljavsky2009}. Here the limiting factorization selects a fixed, not necessarily symmetric, four-node set and a closed affine family of exact two-cycles, including its three-node endpoints. We derive its scalar parameter and a signed defect that measures departure from the family.

Since $q_\infty-h$ is monic of degree four and vanishes at every point of $E$, it equals the node polynomial of $E$. Relabel the four nodes so that $e_1<e_2<e_3<e_4$, and set
$$
E=\{e_1,e_2,e_3,e_4\},
\qquad
R(x)=\prod_{i=1}^{4}(x-e_i),
\qquad
q_\infty=R+h.
$$
The limiting factors satisfy $\Pe\Po=R+h$.

To make the support restriction explicit, define the four-node weight simplex
$$
\mathcal S_E
:=\left\{w\in\mathbb R^N:
w_i\geq0,\quad \sum_{i=1}^Nw_i=1,\quad
w_i=0\ \text{whenever }\lambda_i\notin E\right\}.
$$
Thus $w\in\mathcal S_E$ means exactly that $\supp w\subseteq E$, or equivalently that all weights outside $E$ vanish. The relative interior of $\mathcal S_E$ consists of states with all four $E$-weights positive, while its boundary contains the three-node states used below. Since $T$ preserves zero coordinates, it maps every state in $\mathcal S_E$ at which it is defined back into $\mathcal S_E$.

Orthogonality on four nodes leaves a two-dimensional space of annihilating vectors. The energy identity fixes the scale of one component, leaving a single normalized parameter. Let $w\in\mathcal S_E$ have at least three positive coordinates, and set $P=P_w$ and $H=H(w)$. Since $a_i=w_{e_i}P(e_i)$ annihilates $1$ and $x$,~\eqref{eq:four-annihilator} gives $a_i=(\zeta_1e_i+\zeta_0)/R'(e_i)$. Multiplication by $P(e_i)$ and summation, followed by~\eqref{eq:lagrange-coefficient}, gives $\zeta_1=H>0$. Writing $\alpha=\alpha(w):=-\zeta_0/H$, we obtain
\begin{equation}
w_{e_i}P(e_i)=H\frac{e_i-\alpha}{R'(e_i)},
\qquad 1\le i\le4.
\label{eq:four-bary}
\end{equation}
For fixed $P$ and $H$, the right-hand side of~\eqref{eq:four-bary} is affine in $\alpha$. If exactly one weight on $E$, say $w_{e_j}$, is zero, then~\eqref{eq:four-bary} and $R'(e_j)\neq0$ give $\alpha=e_j$. Thus, on the limiting family constructed below, $\alpha$ is an affine coordinate that remains meaningful at its three-node endpoints: it records exactly which node has disappeared.

Endpoint regularity will be needed below, so we record a formula for $\alpha$ that does not divide by an individual weight. Let $\sigma_1=e_1+\cdots+e_4$. Wherever $P_w$ is defined and $H(w)>0$, set
\begin{equation}\label{eq:alpha-rational}
\widetilde\alpha(w)
=\sigma_1-\frac{\sum_{i=1}^4e_i^3w_{e_i}P_w(e_i)}{H(w)}.
\end{equation}
For $w\in\mathcal S_E$ with at least three positive weights, multiplying~\eqref{eq:four-bary} by $e_i^3$ and summing gives $\widetilde\alpha(w)=\alpha(w)$. Indeed, Lagrange coefficient extraction gives
$$
\sum_i\frac{e_i^3}{R'(e_i)}=1,
\qquad
\sum_i\frac{e_i^4}{R'(e_i)}=\sigma_1,
$$
where the second identity follows by reducing $x^4$ modulo $R$. At any $w\in\mathcal S_E$ with at least three positive weights, the moment Gram determinant and the energy are positive. Formula~\eqref{eq:alpha-rational} therefore makes $\widetilde\alpha$ a rational, hence $C^1$, function of the moments near that state. Since $\widetilde\alpha=\alpha$ on the part of $\mathcal S_E$ where $\alpha$ was defined, it provides a $C^1$ extension of $\alpha$ through each three-node endpoint encountered below.

\subsubsection{The limiting family}

We now identify the closed family of exact two-cycles selected by the limiting factors $\Pe,\Po$ and the limiting energy $h$, and determine the admissible interval for its affine parameter. At an even $\omega$-limit point, $P=\Pe$ and $H=h$, so~\eqref{eq:four-bary} gives the even formula below. The odd formula is its anticipated image under $T$, using $\Pe(e_i)\Po(e_i)=h$; the lemma verifies this relation and determines which values of $\alpha$ produce nonnegative weights:
$$
w_{e_i}^{\mathrm e}(\alpha):=\frac{h(e_i-\alpha)}{R'(e_i)\Pe(e_i)},
\qquad
w_{e_i}^{\mathrm o}(\alpha):=\frac{h(e_i-\alpha)}{R'(e_i)\Po(e_i)},
\qquad 1\le i\le4,
$$
for $\alpha\in\mathbb R$, with all other coordinates zero.

\begin{lemma}\label{lem:edge-family}
Assume that a four-node $\omega$-limit point exists. The vectors $w^{\mathrm e}(\alpha)$ and $w^{\mathrm o}(\alpha)$ sum to one for every real $\alpha$. The values of $\alpha$ for which $w^{\mathrm e}(\alpha)\ge0$ form a nondegenerate closed interval $I$. For $\alpha\in\operatorname{int}I$, all four weights are positive; at each endpoint exactly one weight vanishes. For every $\alpha\in I$, $P_{w^{\mathrm e}(\alpha)}=\Pe$ and $H(w^{\mathrm e}(\alpha))=h$. Moreover, $Tw^{\mathrm e}(\alpha)=w^{\mathrm o}(\alpha)$ and $Tw^{\mathrm o}(\alpha)=w^{\mathrm e}(\alpha)$.
\end{lemma}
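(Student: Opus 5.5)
The plan is to reduce every assertion to the Lagrange identity~\eqref{eq:lagrange-coefficient} on the four nodes of $E$, using the factorization $\Pe\Po=R+h$. Since $R(e_i)=0$, we have $\Pe(e_i)\Po(e_i)=h\neq0$ for every $i$. So neither limiting factor vanishes on $E$, and the families can be rewritten without denominators in $\Pe$ or $\Po$:
$$
w^{\mathrm e}_{e_i}(\alpha)=\frac{(e_i-\alpha)\Po(e_i)}{R'(e_i)},\qquad
w^{\mathrm o}_{e_i}(\alpha)=\frac{(e_i-\alpha)\Pe(e_i)}{R'(e_i)}.
$$
Each numerator is the value at $e_i$ of the monic cubic $(x-\alpha)\Po$, respectively $(x-\alpha)\Pe$. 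Summing over $i$ and applying~\eqref{eq:lagrange-coefficient} with $m=4$ gives total mass one for every real $\alpha$.

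Next I identify the polynomial and energy on the family. For any $\alpha$, the vector $a_i=w^{\mathrm e}_{e_i}(\alpha)\Pe(e_i)=h(e_i-\alpha)/R'(e_i)$ annihilates $1$ and $x$, since $(x-\alpha)\varphi$ has degree at most two for $\deg\varphi\le1$. Hence, whenever $w^{\mathrm e}(\alpha)\ge0$ has at least three positive entries, uniqueness of the monic orthogonal quadratic gives $P_{w^{\mathrm e}(\alpha)}=\Pe$. Then
$$
H=\sum_i a_i\Pe(e_i)=h\sum_i\frac{(e_i-\alpha)\Pe(e_i)}{R'(e_i)}=h
$$
by~\eqref{eq:lagrange-coefficient} applied to the monic cubic $(x-\alpha)\Pe$. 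The same formula gives $(Tw^{\mathrm e}(\alpha))_{e_i}=w^{\mathrm e}_{e_i}\Pe(e_i)^2/h=(e_i-\alpha)\Pe(e_i)/R'(e_i)=w^{\mathrm o}_{e_i}(\alpha)$, and coordinates outside $E$ stay zero. In particular $w^{\mathrm o}(\alpha)\ge0$ whenever $w^{\mathrm e}(\alpha)\ge0$, and $w^{\mathrm o}(\alpha)$ has the same support. The symmetric computation, with the roles of $\Pe$ and $\Po$ swapped, then gives $P_{w^{\mathrm o}(\alpha)}=\Po$, $H=h$ and $Tw^{\mathrm o}(\alpha)=w^{\mathrm e}(\alpha)$.

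It remains to describe $I$. Each coordinate $\alpha\mapsto w^{\mathrm e}_{e_i}(\alpha)$ is affine with nonzero slope $-h/(R'(e_i)\Pe(e_i))$ and vanishes only at $\alpha=e_i$. Hence $I$ is an intersection of four closed half-lines, a closed (possibly empty) interval. It is bounded because the mass-one identity together with nonnegativity bounds each coordinate, and a nonconstant affine function bounded on $I$ forces $I$ to be bounded. The coordinate zeros are the distinct points $e_1,\dots,e_4$. So at an interior point of $I$ no weight can vanish: an affine function that is nonnegative on an interval and has nonzero slope cannot vanish in its interior. At each endpoint some weight vanishes, by closedness and maximality, and exactly one vanishes because the $e_i$ are distinct. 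Consequently every point of $I$ has at least three active nodes, which justifies the uniqueness step above.

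The main obstacle is nondegeneracy, $\operatorname{int}I\neq\emptyset$. I would take the assumed four-node $\omega$-limit point $w^*$. If it is odd, I replace it by its image under $T$, which is an even limit point by the continuity argument in \cref{prop:localization}. Its support is still $E$, because $T$ multiplies the $e_i$-coordinate by $\Po(e_i)^2/h\ne0$. At this even limit point $P=\Pe$ and $H=h$, as in the proof of \cref{prop:three-node}, so~\eqref{eq:four-bary} shows $w^*=w^{\mathrm e}(\alpha(w^*))$ with all four weights positive. Thus $\alpha(w^*)\in\operatorname{int}I$, and $I$ is nondegenerate.
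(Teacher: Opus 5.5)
Your proof is correct and follows essentially the same route as the paper: the mass-one identity via Lagrange coefficient extraction using $h/\Pe(e_i)=\Po(e_i)$, then the annihilation identity giving $P_{w^{\mathrm e}(\alpha)}=\Pe$, $H=h$ and $Tw^{\mathrm e}(\alpha)=w^{\mathrm o}(\alpha)$, and finally the half-line description of $I$, with nondegeneracy supplied by the four-node limit point. The differences are minor: you bound $I$ via $0\le w^{\mathrm e}_{e_i}(\alpha)\le1$ rather than via the slopes summing to zero, and you explicitly push an odd four-node limit point to an even one by $T$; note also that the paper's proof additionally extracts $I=[e_j,e_{j+1}]$ and the factor gap~\eqref{eq:factor-gap}, which are not part of the statement but are used later.
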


\begin{proof}
At every node of $E$, $\Pe(e_i)\Po(e_i)=h$.
The partial-fraction expansion
$$
\frac{\Po(x)}{R(x)}
=\sum_{i=1}^{4}\frac{\Po(e_i)}{R'(e_i)(x-e_i)}
$$
and comparison at infinity give $\sum_i\Po(e_i)/R'(e_i)=0$ and $\sum_i e_i\Po(e_i)/R'(e_i)=1$. Since $h/\Pe(e_i)=\Po(e_i)$,
$$
\sum_iw_{e_i}^{\mathrm e}(\alpha)
=\sum_i\frac{(e_i-\alpha)\Po(e_i)}{R'(e_i)}=1.
$$
Reversing $\Pe$ and $\Po$ gives the odd normalization. Moreover, $w_{e_i}^{\mathrm o}(\alpha)=\Pe(e_i)^2w_{e_i}^{\mathrm e}(\alpha)/h$. Thus the two families have identical coordinate signs and zero sets, and their admissible values of $\alpha$ coincide.

Each coordinate $w_{e_i}^{\mathrm e}(\alpha)$ is affine in $\alpha$, with nonzero slope $-h/(R'(e_i)\Pe(e_i))$. Let
$$
I:=\{\alpha\in\R:w^{\mathrm e}(\alpha)\geq0\}.
$$
Each constraint defining $I$ is a closed half-line. A four-node weight $\omega$-limit point supplies a value $\alpha_0$ for which all four inequalities are strict, so $\alpha_0\in\operatorname{int}I$. The four slopes sum to zero because $\sum_iw_{e_i}^{\mathrm e}(\alpha)=1$; since none is zero, slopes of both signs occur. Hence $I$ is a nondegenerate compact interval. A nonzero-slope affine function that is nonnegative on $I$ cannot vanish in $\operatorname{int}I$, since it would change sign there. Thus all four weights are strictly positive in $\operatorname{int}I$. At either endpoint of $I$, at least one constraint is active. Because $w_{e_i}^{\mathrm e}(\alpha)=0$ exactly when $\alpha=e_i$, and the nodes are distinct, exactly one weight vanishes at each endpoint. In particular, no $e_i$ lies in $\operatorname{int}I$, so the endpoints are consecutive nodes: $I=[e_j,e_{j+1}]$ for some $j\in\{1,2,3\}$.

We next locate the admissible interval relative to the roots of the limiting factors. Besides describing the endpoints, this will provide the uniform nonvanishing bound needed later when ratios involving $\Pe$ and $\Po$ are formed. For $\alpha\in\operatorname{int}I$, positivity and $\Pe(e_i)\Po(e_i)=h>0$ give
$$
\operatorname{sgn}\Pe(e_i)
=\operatorname{sgn}\Po(e_i)
=\operatorname{sgn}\frac{e_i-\alpha}{R'(e_i)}.
$$
Since $\operatorname{sgn}R'(e_i)=(-,+,-,+)$, the node-sign patterns for $I=[e_1,e_2]$, $[e_2,e_3]$, and $[e_3,e_4]$ are, respectively, $(+,+,-,+)$, $(+,-,-,+)$, and $(+,-,+,+)$. In each case the two sign changes occur across the two gaps disjoint from $I$. The intermediate value theorem therefore places, for each of $\Pe$ and $\Po$, one zero in each of those gaps; since each polynomial has degree two, these are its only zeros. Thus neither $\Pe$ nor $\Po$ vanishes in $\operatorname{int}I$, and neither vanishes at an endpoint because $\Pe(e_i)\Po(e_i)=h$. Compactness gives
\begin{equation}\label{eq:factor-gap}
\ell_0:=\min_{\alpha\in I}
\min\{|\Pe(\alpha)|,|\Po(\alpha)|\}>0.
\end{equation}

Next, $w_{e_i}^{\mathrm e}(\alpha)\Pe(e_i)=h(e_i-\alpha)/R'(e_i)$. By~\eqref{eq:lagrange-coefficient}, the right side annihilates $1$ and $x$, so $\Pe$ is the monic orthogonal quadratic for these weights. Multiplying by $\Pe(e_i)$ and summing gives
$$
H(w^{\mathrm e}(\alpha))
=h\sum_i\frac{(e_i-\alpha)\Pe(e_i)}{R'(e_i)}=h,
$$
because $(x-\alpha)\Pe(x)$ is cubic and monic. Finally,
$$
(Tw^{\mathrm e}(\alpha))_{e_i}
=\frac{w_{e_i}^{\mathrm e}(\alpha)\Pe(e_i)^2}{h}
=\frac{(e_i-\alpha)\Pe(e_i)}{R'(e_i)}
=\frac{h(e_i-\alpha)}{R'(e_i)\Po(e_i)}
=w_{e_i}^{\mathrm o}(\alpha).
$$
Interchanging $\Pe$ and $\Po$ gives $Tw^{\mathrm o}(\alpha)=w^{\mathrm e}(\alpha)$.
\end{proof}

Define the even and odd line segments
$$
\Ce=\{w^{\mathrm e}(\alpha):\alpha\in I\},
\qquad
\Co=T\Ce=\{w^{\mathrm o}(\alpha):\alpha\in I\},
\qquad \cC=\Ce\cup\Co.
$$
At an even or odd $\omega$-limit point $w$, continuity and~\eqref{eq:even-odd-P} give $P_w=\Pe$ or $\Po$, respectively, and $H(w)=h$. Thus~\eqref{eq:four-bary} places $w$ in $\Ce$ or $\Co$, proving $\Omegaew\subset\Ce$ and $\Omegaow\subset\Co$. A three-node limit point is an endpoint because its missing node equals $\alpha(w)$. Since every limit point of $(w_{2n})$ lies in $\Ce$ and every limit point of $(w_{2n+1})$ lies in $\Co$, the approach-to-$\omega$-limit observation above yields $\dist(w_{2n},\Ce)\to0$ and $\dist(w_{2n+1},\Co)\to0$.
\subsection{Exact dynamics on the four-node weight simplex}

The preceding relations show that the even and odd sequences approach the two-cycle segments, but they do not prevent indefinite motion along those segments. Moreover, the actual trajectory may retain weights outside $E$. We first derive the exact dynamics for an orbit whose states remain in the invariant simplex $\mathcal S_E$; geometric decay will later control the discrepancy from the full trajectory.

For this subsection, consider an exact orbit $w_{k+1}=Tw_k$ with $w_k\in\mathcal S_E$ and at least three positive coordinates at every step. The choices below isolate the single scalar responsible for motion within $\mathcal S_E$. Since $q_k$ and $R$ are both monic quartics, $q_k-R$ has degree at most three, and $c_k$ is its leading coefficient. Meanwhile, $K_\alpha$ is the unique cubic polynomial satisfying $R(x)-R(\alpha)=(x-\alpha)K_\alpha(x)$ that is also monic. Let $\alpha_k=\alpha(w_k)$, and set
$$
\tau_k(x)=x-\alpha_k,
\qquad
K_\alpha(x)=\frac{R(x)-R(\alpha)}{x-\alpha},
\qquad
c_k=\coeff{3}{(q_k-R)}.
$$
The polynomial $K_\alpha$ is cubic and monic.

\begin{lemma}\label{lem:face-equations}
Along an exact orbit in $\mathcal S_E$ having at least three positive coordinates at every step,
\begin{align}
q_k&=R+H_{k+1}+c_kK_{\alpha_k},\qquad
\alpha_{k+1}-\alpha_k=\frac{c_kR(\alpha_k)}{H_{k+1}},
\label{eq:face-motion}\\
d_k&=\frac{c_k^2}{H_{k+1}}
\sum_{i=1}^{4}w_{k,e_i}K_{\alpha_k}(e_i)^2. \label{eq:face-diss}
\end{align}
\end{lemma}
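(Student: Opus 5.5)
The plan is to read off everything from the barycentric formula~\eqref{eq:four-bary}, applied at two consecutive times. Since every $w_k$ lies in $\mathcal S_E$ and has at least three positive coordinates, \eqref{eq:four-bary} holds at step $k$ and at step $k+1$:
$$
w_{k,e_i}P_k(e_i)=H_k\frac{\tau_k(e_i)}{R'(e_i)},
\qquad
w_{k+1,e_i}P_{k+1}(e_i)=H_{k+1}\frac{\tau_{k+1}(e_i)}{R'(e_i)}.
$$
I would then compute the left side at step $k+1$ directly from the weight update $w_{k+1,i}=w_{k,i}P_k(\lambda_i)^2/H_k$. This gives
$$
w_{k+1,e_i}P_{k+1}(e_i)=\frac{w_{k,e_i}P_k(e_i)\,q_k(e_i)}{H_k}=\frac{\tau_k(e_i)q_k(e_i)}{R'(e_i)}.
$$
Comparing the two expressions and cancelling $R'(e_i)\neq0$ yields the key nodal identity
$$
(e_i-\alpha_k)\,q_k(e_i)=H_{k+1}(e_i-\alpha_{k+1}),\qquad 1\le i\le 4.
$$
This is the only place where the orbit hypothesis (support in $E$, at least three positive weights) is used.

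Next set $D:=q_k-R-H_{k+1}$. Since $q_k$ and $R$ are both monic quartics, $D$ is a polynomial of degree at most three with $[x^3]D=c_k$. Because $R$ vanishes on $E$, substituting into the nodal identity gives
$$
(e_i-\alpha_k)D(e_i)=H_{k+1}(\alpha_k-\alpha_{k+1})=:\beta \quad\text{for all four }e_i.
$$
So the polynomial $(x-\alpha_k)D(x)-\beta$ has degree at most four, leading coefficient $c_k$, and vanishes at the four nodes of $E$. It must therefore equal $c_kR(x)$. Evaluating at $x=\alpha_k$ gives $-\beta=c_kR(\alpha_k)$, which is the stated formula for $\alpha_{k+1}-\alpha_k$. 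It then follows that $(x-\alpha_k)D(x)=c_k\bigl(R(x)-R(\alpha_k)\bigr)$, and dividing by $x-\alpha_k$ as polynomials gives $D=c_kK_{\alpha_k}$. This proves~\eqref{eq:face-motion}.

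For~\eqref{eq:face-diss}, I would start from~\eqref{eq:variance} in \cref{lem:energy-displacement}. On the support $E$, we have $q_k(e_i)-H_{k+1}=R(e_i)+c_kK_{\alpha_k}(e_i)=c_kK_{\alpha_k}(e_i)$, because $R$ vanishes there. Substituting this into~\eqref{eq:variance} gives the formula immediately.

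The main obstacle is mild. I need~\eqref{eq:four-bary} to be valid at both steps, including the case where $w_k$ is a three-node boundary state with $\alpha_k$ equal to a node. That is covered because the derivation of~\eqref{eq:four-bary} only needs at least three positive weights. In that boundary case $K_{\alpha_k}$ is still the polynomial quotient, so no division by zero occurs anywhere in the argument.
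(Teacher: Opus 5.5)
Your proof is correct and follows the paper's argument essentially step for step. It uses the same nodal identity $\tau_k(e_i)q_k(e_i)=H_{k+1}\tau_{k+1}(e_i)$ obtained from \eqref{eq:four-bary} and one application of $T$, followed by divisibility by $R$, evaluation at $x=\alpha_k$, cancellation of $x-\alpha_k$, and substitution into \eqref{eq:variance}. The only difference is cosmetic: you subtract $R+H_{k+1}$ first and identify the quartic $(x-\alpha_k)D(x)-\beta$ with $c_kR$, whereas the paper factors the quintic $\tau_kq_k-H_{k+1}\tau_{k+1}$ as $R(\tau_k+c_k)$.
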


\begin{proof}
Applying~\eqref{eq:four-bary} at time $k$, followed by one application of $T$ and the same identity at time $k+1$, gives $\tau_k(e_i)q_k(e_i)=H_{k+1}\tau_{k+1}(e_i)$ at every node of $E$. Hence the polynomial $\tau_kq_k-H_{k+1}\tau_{k+1}$ vanishes at all four roots of $R$, so it is divisible by $R$. Its quotient is monic linear, hence equals $\tau_k+c_k$; comparison of the degree-four coefficients identifies the constant with $c_k=\coeff{3}{(q_k-R)}$. Thus
\begin{equation}
\tau_k(q_k-R)=c_kR+H_{k+1}\tau_{k+1}.
\label{eq:face-A}
\end{equation}
Evaluating at $x=\alpha_k$ gives the second identity in~\eqref{eq:face-motion}.

Now use $R(x)-R(\alpha_k)=\tau_k(x)K_{\alpha_k}(x)$ and rewrite $H_{k+1}\tau_{k+1}=H_{k+1}\tau_k-c_kR(\alpha_k)$ by the second identity in~\eqref{eq:face-motion}. Substituting into~\eqref{eq:face-A} and canceling the polynomial factor $\tau_k$ gives the first identity in~\eqref{eq:face-motion}. Finally, at each node $e_i\in E$, $R(e_i)=0$, so $q_k(e_i)-H_{k+1}=c_kK_{\alpha_k}(e_i)$. Substitution into the global variance identity~\eqref{eq:variance} gives~\eqref{eq:face-diss}.
\end{proof}

Inside the two-cycle family $c_k=0$. Away from it, the second identity in~\eqref{eq:face-motion} shows that the sign of $c_kR(\alpha_k)$ determines the direction of motion in $\alpha_k$, whereas~\eqref{eq:face-diss} shows that $c_k^2$ determines the energy increment. The interval $I$ lies between two consecutive roots of $R$, so $R$ is strictly positive throughout $\operatorname{int}I$ or strictly negative throughout $\operatorname{int}I$, and it vanishes at the two endpoints. We show below that the relevant nearby states keep their $\alpha$-values in this same interval. Thus, once the next identity shows that $c_k$ cannot reverse sign, the drift in $\alpha_k$ has a consistent orientation. The first identity in~\eqref{eq:face-motion} also shows that this one scalar accounts for the entire nonconstant discrepancy of $q_k$ from $R+H_{k+1}$.

\subsection{Sign transport for the defect}

To control cancellation in the accumulated drift, we exploit the factor $P_{k+1}$ shared by consecutive product quartics. Evaluating their difference at the two roots of this factor removes the polynomial term containing $P_{k+1}$, while subtracting the resulting equations removes the scalar energy term. The divided difference below is precisely the coefficient that remains. For a monic quadratic $P$ with distinct roots $z_1,z_2$, define
$$
L(\alpha,P):=\frac{K_\alpha(z_1)-K_\alpha(z_2)}{z_1-z_2}.
$$
If $R(x)=x^4+r_3x^3+r_2x^2+r_1x+r_0$ and $P(x)=x^2+p_1x+p_0$, then
$$
K_\alpha(x)=x^3+(r_3+\alpha)x^2+(r_2+r_3\alpha+\alpha^2)x+\text{constant}.
$$
Since $z_1+z_2=-p_1$ and $z_1z_2=p_0$, direct substitution gives
\begin{equation}\label{eq:L-coeff}
L(\alpha,P)
=p_1^2-p_0-(r_3+\alpha)p_1+(r_2+r_3\alpha+\alpha^2).
\end{equation}
Equivalently, if $P(x)=x^2-\sigma x+\varpi$, then
$$
L(\alpha,P)=\sigma^2-\varpi+(r_3+\alpha)\sigma+r_2+r_3\alpha+\alpha^2.
$$
We henceforth use~\eqref{eq:L-coeff} to define $L$ for every monic quadratic, including one with a double root. Thus $L$ is polynomial in $\alpha$ and in the coefficients of $P$; no root choice or root-separation bound is involved.

\begin{lemma}\label{lem:factor-sharing}
For an exact orbit $w_{k+1}=Tw_k$ in $\mathcal S_E$ whose states have at least three positive coordinates,
\begin{equation}
c_{k+1}L(\alpha_{k+1},P_{k+1})
=c_kL(\alpha_k,P_{k+1}).
\label{eq:orientation}
\end{equation}
On the two-cycle family $\cC$,
\begin{equation}
L(\alpha,\Po)=\Pe(\alpha)\neq0,
\qquad L(\alpha,\Pe)=\Po(\alpha)\neq0.
\label{eq:L-edge}
\end{equation}
Hence there is a neighborhood $\mathcal U_E$ of $\cC$, relative to $\mathcal S_E$, such that both $u$ and $Tu$ have at least three positive coordinates for every $u\in\mathcal U_E$. On $\mathcal U_E$, the two factors
$$
L(\alpha(u),P_{Tu}),
\qquad
L(\alpha(Tu),P_{Tu})
$$
are nonzero and have the same sign; these factors, their reciprocals, and their first derivatives are uniformly bounded.
\end{lemma}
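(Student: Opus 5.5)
We prove \eqref{eq:orientation} from the first identity in \eqref{eq:face-motion}, applied at times $k$ and $k+1$. The two quartics share the factor $P_{k+1}$:
$$
q_k=P_kP_{k+1},\qquad q_{k+1}=P_{k+1}P_{k+2}.
$$
Hence \eqref{eq:face-motion} gives
$$
P_{k+1}\,(P_{k+2}-P_k)=H_{k+2}-H_{k+1}+c_{k+1}K_{\alpha_{k+1}}-c_kK_{\alpha_k}.
$$
First assume $P_{k+1}$ has distinct roots $z_1,z_2$. Evaluating at $z_1$ and $z_2$ kills the left side, and subtracting the two evaluations removes the constant $H_{k+2}-H_{k+1}$. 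Dividing by $z_1-z_2$ yields
$$
c_{k+1}L(\alpha_{k+1},P_{k+1})=c_kL(\alpha_k,P_{k+1}).
$$
If the roots coincide, we pass to the limit, or argue directly. The right side is divisible by $P_{k+1}$, so its remainder modulo $P_{k+1}$ vanishes. The linear coefficient of that remainder is exactly the polynomial expression \eqref{eq:L-coeff} applied to $c_{k+1}K_{\alpha_{k+1}}-c_kK_{\alpha_k}$; equivalently, the remainder of $K_\alpha$ modulo $P$ has linear coefficient $L(\alpha,P)$. This coefficient-level version needs no root separation, so it handles the double-root case as well.

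For \eqref{eq:L-edge} we use the same remainder viewpoint. Write $R(x)-R(\alpha)=(x-\alpha)K_\alpha(x)$, and recall $R=\Pe\Po-h$. Modulo $\Po$ this gives
$$
(x-\alpha)K_\alpha\equiv -h-R(\alpha).
$$
Now $K_\alpha\bmod \Po$ has the form $Lx+M$. Then $(x-\alpha)(Lx+M)$ reduced modulo $\Po$ must be a constant, which forces a relation between $L$ and $M$. A more direct route: at a root $z$ of $\Po$, $K_\alpha(z)=(R(z)-R(\alpha))/(z-\alpha)=(-h-R(\alpha))/(z-\alpha)$. The divided difference over the two roots is then
$$
L(\alpha,\Po)=\frac{h+R(\alpha)}{(z_1-\alpha)(z_2-\alpha)}=\frac{h+R(\alpha)}{\Po(\alpha)}=\frac{\Pe(\alpha)\Po(\alpha)}{\Po(\alpha)}=\Pe(\alpha).
$$
This is a polynomial identity in $\alpha$ for real or complex roots, and it extends by continuity to double roots. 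Nonvanishing on $I$ is \eqref{eq:factor-gap}. The identity with $\Pe$ and $\Po$ swapped is symmetric.

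For the neighborhood statement, we use compactness of $\cC$. Each point of $\cC$ has at least three positive coordinates, and so does its $T$-image, which also lies in $\cC$ by \cref{lem:edge-family}. Openness of positivity and continuity of $T$ then give a relative neighborhood of $\cC$ in $\mathcal S_E$ on which both $u$ and $Tu$ have at least three positive coordinates. On this neighborhood $\alpha=\widetilde\alpha$ from \eqref{eq:alpha-rational} and $P_u$ are $C^1$, being rational in moments with positive Gram determinant and energy. Hence both factors in the statement are $C^1$.

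On $\Ce$, with $u=w^{\mathrm e}(\alpha)$, we have $Tu=w^{\mathrm o}(\alpha)$ and $P_{Tu}=\Po$. Moreover $\alpha(Tu)=\alpha$, since $w^{\mathrm o}$ satisfies \eqref{eq:four-bary} with $P=\Po$ and $H=h$. So both factors equal $\Pe(\alpha)$. On $\Co$ both factors equal $\Po(\alpha)$. These values are bounded away from zero by $\ell_0$ on the compact set $\cC$. Shrinking $\mathcal U_E$ to a compact neighborhood, by uniform continuity both factors stay within $\ell_0/2$ of a common value of modulus at least $\ell_0$. They are therefore nonzero with the same sign, and their reciprocals and first derivatives are bounded.

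The main obstacle is the careful $C^1$ extension through the three-node endpoints and the double-root case for $L$. The remainder formulation handles the latter. For the former we rely on $\widetilde\alpha$ rather than on \eqref{eq:four-bary}, which could divide by a vanishing weight.
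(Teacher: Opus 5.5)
Your proposal is correct. For \eqref{eq:orientation} and for the neighborhood statement you follow the paper's argument. You subtract the two instances of \eqref{eq:face-motion} and evaluate at the roots of the shared factor $P_{k+1}$; for the neighborhood you use compactness of $\cC$, continuity, and the fact that the two factors coincide on $\cC$. You also usefully spell out why they coincide there, namely $\alpha(Tu)=\alpha(u)$ and $P_{Tu}\in\{\Pe,\Po\}$.

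The double-root contingency you guard against never actually arises. $P_{k+1}$ is the orthogonal quadratic of a measure with at least three support points, so its zeros are simple. Your remainder-modulo-$P_{k+1}$ reformulation is still a clean way to see why \eqref{eq:L-coeff} is the natural root-free definition of $L$.

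Where you genuinely differ is \eqref{eq:L-edge}. The paper reads off $r_3=a_{\mathrm e}+a_{\mathrm o}$ and $r_2=b_{\mathrm e}+b_{\mathrm o}+a_{\mathrm e}a_{\mathrm o}$ from $\Pe\Po=R+h$ and substitutes them into \eqref{eq:L-coeff}. That is a purely mechanical check that needs no root information. You instead use $R(z)=-h$ at each root $z$ of $\Po$, which makes the divided difference of $K_\alpha$ equal to $(R(\alpha)+h)/\Po(\alpha)=\Pe(\alpha)$. This holds first for $\alpha$ away from the roots of $\Po$, and then for all $\alpha$ because both sides are polynomials.

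Your route explains the identity structurally: it shows $L(\alpha,P)=\bigl((R+c)/P\bigr)(\alpha)$ whenever a monic quadratic $P$ divides $R+c$. The cost is the small exclusion-and-continuation step. The paper's coefficient comparison is shorter and entirely algebraic.
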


\begin{proof}
Subtract the first identity in~\eqref{eq:face-motion} at times $k$ and $k+1$:
$$
c_{k+1}K_{\alpha_{k+1}}-c_kK_{\alpha_k}
+(H_{k+2}-H_{k+1})
=P_{k+1}(P_{k+2}-P_k).
$$
Evaluate at the two roots of $P_{k+1}$, subtract the resulting equations, and divide by the root difference. This gives~\eqref{eq:orientation}.

To verify~\eqref{eq:L-edge} from~\eqref{eq:L-coeff}, express the factors as $\Pe(x)=x^2+a_{\mathrm e}x+b_{\mathrm e}$ and $\Po(x)=x^2+a_{\mathrm o}x+b_{\mathrm o}$. Comparison of the cubic and quadratic coefficients in $\Pe\Po=R+h$ gives $r_3=a_{\mathrm e}+a_{\mathrm o}$ and $r_2=b_{\mathrm e}+b_{\mathrm o}+a_{\mathrm e}a_{\mathrm o}$. Substitution into~\eqref{eq:L-coeff}, with $P=\Po$, yields $L(\alpha,\Po)=\alpha^2+a_{\mathrm e}\alpha+b_{\mathrm e}=\Pe(\alpha)$.
The reversed identity follows in the same way. The lower bound~\eqref{eq:factor-gap} therefore applies on the entire two-cycle family. Every state in $\cC$, as well as its image under $T$, has at least three positive coordinates. Continuity and compactness therefore give a sufficiently small neighborhood $\mathcal U_E$ of $\cC$ relative to $\mathcal S_E$ on which the support-size assertion and the uniform bounds hold. On $\cC$, the two displayed factors coincide; after shrinking $\mathcal U_E$, they remain nonzero and have the same sign.
\end{proof}

Equation~\eqref{eq:orientation} gives $c_{k+1}=L(\alpha_k,P_{k+1})c_k/L(\alpha_{k+1},P_{k+1})$, and the multiplier is positive near the two-cycle family. Thus the dynamics can change the size of $c_k$ but cannot reverse its sign there.

\section{Decay and convergence}

The preceding identities describe motion within the invariant simplex $\mathcal S_E$, including at its three-node endpoints. To complete the proof we must control two effects: mass outside $E$, and possible drift along the one-parameter family $\cC$. The coefficient in~\eqref{eq:face-diss} can degenerate at an endpoint, so we treat the external mass first, using the quartic tail estimate~\eqref{eq:q-limit-tail}, which never divides by an individual weight. Once that mass decays geometrically, the sequence obtained by retracting the full orbit onto $\mathcal S_E$ is a pseudo-orbit in $\mathcal S_E$ whose stepwise errors have finite total norm.

\subsection{Classification of the external multipliers}

For each external node $\lambda\in\{\lambda_1,\dots,\lambda_N\}\setminus E$, define $\rho_\lambda:=q_\infty(\lambda)/h$. This is the limiting signed two-block multiplier at $\lambda$; the corresponding multiplier for the squared weight is $\rho_\lambda^2$. This transverse-multiplier viewpoint also underlies the stability analysis in~\cite[sec.~1.4.3]{PronzatoWynnZhigljavsky2009}. The neutral case $\rho_\lambda=-1$, which is invisible after squaring the coordinates, is the case that requires the phase argument below.

Every external coordinate tends to zero. Indeed, if $w_{k,\lambda}\not\to0$ for some $\lambda\notin E$, a subsequence on which this coordinate is bounded below has an $\omega$-limit point whose support is not contained in $E$, contradicting the support inclusion established after~\eqref{eq:E}. If the coordinate is never annihilated, the two-block recurrence and~\eqref{eq:q-limit-tail} give
\begin{equation}
\frac{w_{k+2,\lambda}}{w_{k,\lambda}}
=\frac{q_k(\lambda)^2}{H_kH_{k+1}}
\to\rho_\lambda^2.
\label{eq:external-ratio}
\end{equation}
When $|\rho_\lambda|>1$, this ratio is eventually bounded below by a constant greater than one, so along either the even or the odd subsequence the coordinate would increase geometrically, contradicting its convergence to zero. Moreover, $\rho_\lambda=1$ would imply $q_\infty(\lambda)=h$, hence $\lambda\in E$. Thus every external coordinate that is never annihilated satisfies $|\rho_\lambda|<1$ or $\rho_\lambda=-1$.

\subsection{Exclusion of the multiplier \texorpdfstring{$-1$}{-1} and geometric external decay}

The only unresolved case is $\rho_\lambda=-1$. Then the signed coordinate reverses asymptotically, but its weight has limiting multiplier one, so quartic convergence gives no contraction. We therefore seek a quantity whose increment has a definite sign, yet which tends to $-\infty$ if the external weight tends to zero along an interior four-node subsequence. The term $\log w_{2n,\lambda}$ has the latter property. Adding an interpolating combination of the four frame logarithms makes the leading cubic perturbation cancel in its increment: the remaining linear term acquires an extra energy-deficit factor and is therefore quadratic compared with the positive energy drift.

Fix an external node $\lambda$ with $\rho_\lambda=-1$, equivalently $R(\lambda)=-2h$. Define the degree-three Lagrange coefficients $\Lambda_i(\lambda):=R(\lambda)/((\lambda-e_i)R'(e_i))$. They satisfy $\sum_i\Lambda_i(\lambda)=1$ and $f(\lambda)=\sum_i\Lambda_i(\lambda)f(e_i)$ whenever $\deg f\le3$.

A four-node limit point is an interior point of its segment, and $T$ carries it to an interior point of the other segment. We analyze the even subsequence and write $H_n^{\mathrm e}:=H_{2n}$, $H_n^{\mathrm o}:=H_{2n+1}$, and $q_n^{\mathrm e}:=q_{2n}$. Because zero weights are absorbing, the existence of arbitrarily late interior four-node iterates implies that every frame weight $w_{k,e_i}$ has been positive throughout. Only annihilation of the external coordinate can make $J_{\lambda,n}$ undefined.
Whenever the five relevant masses are positive, define
\begin{equation}
J_{\lambda,n}
:=\log w_{2n,\lambda}
+\sum_{i=1}^{4}\Lambda_i(\lambda)\log w_{2n,e_i}.
\label{eq:J}
\end{equation}

The coefficients $\Lambda_i(\lambda)$ need not be positive; they are chosen for exact degree-three interpolation, not convexity.

\begin{proposition}\label{prop:phase}
If $\Omegaew$ or $\Omegaow$ contains a four-node point, then, for every external node $\lambda$ with $\rho_\lambda=-1$, there is $n_0$ such that
$$
J_{\lambda,n+1}-J_{\lambda,n}\geq0
$$
whenever $n\geq n_0$ and $w_{2n,\lambda}w_{2n+2,\lambda}>0$. In particular, if the $\lambda$-coordinate is never annihilated, then $(J_{\lambda,n})$ is eventually nondecreasing.
Consequently, every external coordinate with $\rho_\lambda=-1$ is annihilated after finitely many blocks.
\end{proposition}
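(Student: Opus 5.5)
The plan is to compute the increment of $J_{\lambda,n}$ exactly from the two-block recurrence, show that its first-order part cancels by degree-three interpolation, and bound what remains below by a positive multiple of the energy deficit minus a quadratic error. Write $k=2n$. By~\eqref{eq:signed-ratio} (equivalently~\eqref{eq:external-ratio} and its analogue at the frame nodes), every positive coordinate satisfies $\log w_{k+2,\mu}-\log w_{k,\mu}=\log q_k(\mu)^2-\log(H_kH_{k+1})$. Since $\sum_i\Lambda_i(\lambda)=1$, this gives
\begin{equation*}
J_{\lambda,n+1}-J_{\lambda,n}
=\log q_k(\lambda)^2+\sum_{i=1}^4\Lambda_i(\lambda)\log q_k(e_i)^2-2\log(H_kH_{k+1}).
\end{equation*}
Set $\delta_k:=q_k-q_\infty=q_k-R-h$. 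Because $q_k$ and $R$ are both monic quartics, $\deg\delta_k\le3$, and~\eqref{eq:q-limit-tail} gives $\|\delta_k\|\le C(h-H_k)$. Using $R(e_i)=0$ and $R(\lambda)=-2h$, we have $q_k(e_i)=h+\delta_k(e_i)$ and $q_k(\lambda)=-h+\delta_k(\lambda)$.

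Next, Taylor expand the logarithms, which is uniform once $\delta_k$ is small. This gives
\begin{equation*}
\log q_k(\lambda)^2+\sum_i\Lambda_i\log q_k(e_i)^2
=4\log h+\frac{2}{h}\Bigl(-\delta_k(\lambda)+\sum_i\Lambda_i(\lambda)\delta_k(e_i)\Bigr)+O\bigl(\|\delta_k\|^2\bigr).
\end{equation*}
The bracketed linear term vanishes exactly, because $\deg\delta_k\le3$ and so $\delta_k(\lambda)=\sum_i\Lambda_i(\lambda)\delta_k(e_i)$. This cancellation is the reason for the interpolating choice of coefficients; the $\Lambda_i$ may be negative, but that does not matter since only their absolute values enter the remainder constant.

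For the energy term, \cref{lem:energy-displacement} gives $H_k\le H_{k+1}\le h$. Hence
\begin{equation*}
-2\log(H_kH_{k+1})+4\log h\;\ge\; -2\log(H_k/h)\;\ge\; 2(h-H_k)/h.
\end{equation*}
Combining the two displays, the increment is at least $2(h-H_k)/h-C(h-H_k)^2$. This is nonnegative once $h-H_k$ is below a fixed threshold, which holds for $n\ge n_0$. If $H_k=h$, then $\delta_k=0$ and every term vanishes, so the case of equality needs no separate treatment. The main point I expect to need care is that the Taylor constants are uniform. This requires $|\delta_k(\mu)|\le h/2$ at the five nodes, which follows from $\delta_k\to0$. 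It also requires the frame weights to stay positive so that $J$ is defined. As noted before the statement, zero weights are absorbing and late interior four-node iterates exist, so every frame weight is positive along the whole orbit.

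For the final assertion, suppose the $\lambda$-coordinate is never annihilated. Then $J_{\lambda,n}$ is eventually nondecreasing, hence bounded below. Take a four-node limit point. If it lies in $\Omegaow$, apply $T$ and continuity to obtain an interior four-node point of $\Omegaew$; this uses $T\Co=\Ce$ and \cref{lem:edge-family}, under which interior parameters map to interior parameters. Choose $n_j$ with $w_{2n_j}$ converging to this interior point. Along this subsequence the frame weights stay bounded away from zero, so the terms $\Lambda_i\log w_{2n_j,e_i}$ remain bounded. On the other hand, $w_{2n_j,\lambda}\to0$ because every external coordinate tends to zero. Hence $J_{\lambda,n_j}\to-\infty$, contradicting the lower bound. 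Therefore the $\lambda$-coordinate must be annihilated after finitely many blocks.
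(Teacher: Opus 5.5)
Your argument is correct and is essentially the paper's proof. You take the log-increment from the two-block recurrence, use degree-three interpolation to kill the first-order perturbation, bound the remainder by $O((h-H_{2n})^2)$ via \eqref{eq:q-limit-tail}, beat it with the energy drift $2(h-H_{2n})/h$, and finish by contradiction along a subsequence converging to an interior four-node point. The one difference is bookkeeping: you center at $h$ (writing $\delta_k=q_k-q_\infty$) rather than at the odd energy $H_{2n+1}$, so the linear term cancels exactly instead of surviving with the extra factor $h-H_{2n+1}$ as in the paper's remainder $\mathfrak r_n$, which slightly simplifies the estimate.
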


\begin{proof}
Fix $n$ such that both $J_{\lambda,n}$ and $J_{\lambda,n+1}$ are defined. Set $\varepsilon_n=q_n^{\mathrm e}-(R+H_n^{\mathrm o})$. This is the error after removing the limiting quartic shape and recentering at the current odd energy; it has degree at most three, exactly the range controlled by the interpolation formula. For each of the five relevant coordinates, the exact two-block recurrence may then be divided and logged:
$$
\frac{w_{2n+2,j}}{w_{2n,j}}
=\frac{q_n^{\mathrm e}(\lambda_j)^2}{H_n^{\mathrm e}H_n^{\mathrm o}}.
$$
At the nodes of $E$, $q_n^{\mathrm e}(e_i)=H_n^{\mathrm o}+\varepsilon_n(e_i)$. At the node $\lambda\notin E$,
$$
q_n^{\mathrm e}(\lambda)
=-2h+H_n^{\mathrm o}+\varepsilon_n(\lambda)
=-\bigl(2h-H_n^{\mathrm o}-\varepsilon_n(\lambda)\bigr).
$$
For all sufficiently large $n$, $q_n^{\mathrm e}(e_i)>0$ for all $i$ and $q_n^{\mathrm e}(\lambda)<0$. Thus the squared recurrence contributes $2\log|q_n^{\mathrm e}|$ at each node. Separating the constant parts $H_n^{\mathrm o}$ and $2h-H_n^{\mathrm o}$, and then using $\sum_i\Lambda_i(\lambda)=1$ and the interpolation identity $\varepsilon_n(\lambda)=\sum_i\Lambda_i(\lambda)\varepsilon_n(e_i)$, gives
\begin{equation}
J_{\lambda,n+1}-J_{\lambda,n}
=2\log\frac{2h-H_n^{\mathrm o}}{H_n^{\mathrm e}}+2\mathfrak r_n.
\label{eq:J-inc}
\end{equation}
The first term is the positive energy drift because $H_n^{\mathrm e}\leq H_n^{\mathrm o}\leq h$. Interpolation forces the linear part of the remainder to carry an additional factor $h-H_n^{\mathrm o}$, so it is quadratic relative to that drift. More precisely,

$$
\mathfrak r_n
=\log\left(1-\frac{\varepsilon_n(\lambda)}{2h-H_n^{\mathrm o}}\right)
+\sum_i\Lambda_i(\lambda)
\log\left(1+\frac{\varepsilon_n(e_i)}{H_n^{\mathrm o}}\right).
$$
Since both $H_n^{\mathrm e}$ and $H_n^{\mathrm o}$ converge to $h>0$, while $\|\varepsilon_n\|\to0$, the quantities $H_n^{\mathrm e}$, $H_n^{\mathrm o}$, and $2h-H_n^{\mathrm o}$ are uniformly bounded away from zero for all sufficiently large $n$. Evaluation at the fixed nodes is bounded by the chosen coefficient norm, so every logarithm argument is then positive and all Taylor constants below are uniform.

The linear part of $\mathfrak r_n$ is
\begin{align*}
-\frac{\varepsilon_n(\lambda)}{2h-H_n^{\mathrm o}}
+\frac1{H_n^{\mathrm o}}\sum_i\Lambda_i(\lambda)\varepsilon_n(e_i)
&=\frac{2(h-H_n^{\mathrm o})}{H_n^{\mathrm o}(2h-H_n^{\mathrm o})}\varepsilon_n(\lambda).
\end{align*}
Taylor's theorem therefore gives, in any fixed coefficient norm on cubics, $|\mathfrak r_n|\le C((h-H_n^{\mathrm o})\|\varepsilon_n\|+\|\varepsilon_n\|^2)$. Since $q_\infty=R+h$, the tail estimate in~\eqref{eq:q-limit-tail} yields $\|\varepsilon_n\|\le\|q_n^{\mathrm e}-q_\infty\|+|h-H_n^{\mathrm o}|\le C(h-H_n^{\mathrm e})$, and hence $\mathfrak r_n=O((h-H_n^{\mathrm e})^2)$. Moreover, $2\log((2h-H_n^{\mathrm o})/H_n^{\mathrm e})\ge2\log(h/H_n^{\mathrm e})\ge2(h-H_n^{\mathrm e})/h$. Substitution in~\eqref{eq:J-inc} gives $J_{\lambda,n+1}-J_{\lambda,n}\ge2(h-H_n^{\mathrm e})/h-C(h-H_n^{\mathrm e})^2\ge0$ for all sufficiently large $n$.

Suppose that the external $\lambda$-coordinate is never annihilated. Then $J_{\lambda,n}$ is defined, and the multiplier classification above gives $w_{2n,\lambda}\to0$. Along a subsequence converging to an interior four-node $\omega$-limit point, the four weights on $E$ are bounded below. Consequently~\eqref{eq:J} gives $J_{\lambda,n}\to-\infty$, contradicting the eventual monotonicity just proved.
\end{proof}

The only external coordinates that can survive indefinitely are now strictly contractive. Since the spectrum is finite, the individual contraction estimates combine into a uniform geometric bound. For a weight vector $w$, define its external mass by $U(w):=\sum_{\lambda_i\notin E}w_i$. Under the standing assumption that a four-node limit exists, there are $C<\infty$ and $0<\theta<1$ such that
\begin{equation}
U(w_k)\le C\theta^k
\qquad(k\ge0).
\label{eq:external-decay}
\end{equation}
To prove this, consider the external nodes whose coordinates are never annihilated. The classification above and \cref{prop:phase} show that each such node satisfies $|\rho_\lambda|<1$. If there are none, every external coordinate vanishes after a common finite index, and the conclusion is immediate.

Otherwise, for each surviving external node choose $\rho_\lambda^2<\gamma_\lambda<1$. Equation~\eqref{eq:external-ratio} gives $w_{k+2,\lambda}\le\gamma_\lambda w_{k,\lambda}$ for all sufficiently large $k$, and iteration separately along the even and odd subsequences yields $w_{k,\lambda}\le C_\lambda(\sqrt{\gamma_\lambda})^k$. There are only finitely many external nodes. Taking the largest $\sqrt{\gamma_\lambda}$, and enlarging the constant to cover the finitely many initial indices and the coordinates annihilated later, proves~\eqref{eq:external-decay}. In particular,~\eqref{eq:external-decay} gives $\sum_kU(w_k)<\infty$. The endpoint retraction estimate below converts this bound on the external mass into stepwise errors relative to the dynamics in $\mathcal S_E$; the sum of their norms is finite.

\subsection{Convergence of the four-node weights}

After~\eqref{eq:external-decay}, the only possible failure of convergence is drift along the continuum $\cC$: approaching a compact segment does not by itself imply approaching one point. We retract onto $\mathcal S_E$ and track two scalars. The parameter $\alpha$ records position along the family, while $c$ is the signed defect from the exact two-cycle identity. Near $\cC$ within $\mathcal S_E$, $c$ is transported by a positive multiplier and drives $\alpha$ in a fixed orientation, while the external mass contributes only exponentially decaying errors.

The following abstract lemma records this mechanism. If $c_n$ eventually keeps one sign, the motion of $\alpha_n$ is one-sided up to exponentially decaying errors; if $c_n$ crosses zero infinitely often, positivity of $M_n$ forces $c_n$ itself to be exponentially small.

\begin{lemma}\label{lem:perturbed-orientation}
Let $\alpha_n$ lie in a compact interval and suppose
\begin{equation}
c_{n+1}=M_nc_n+\xi_n,\qquad
\alpha_{n+1}-\alpha_n=\mathcal A_nc_n+\eta_n,
\label{eq:perturbed-system}
\end{equation}
where, for some $C<\infty$ and $0<\theta<1$,
$$
M_n>0,
\qquad M_n\to1,
\qquad |\xi_n|+|\eta_n|\le C\theta^n,
\qquad |\mathcal A_n|\le C,
$$
and the nonzero $\mathcal A_n$ all have the same sign. Then $\sum_n|\alpha_{n+1}-\alpha_n|<\infty$, so $\alpha_n$ converges.
\end{lemma}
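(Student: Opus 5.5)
The plan is to reduce everything to summability of $|\mathcal A_nc_n|$. Since $\sum_n|\eta_n|\le C\sum_n\theta^n<\infty$, it suffices to prove $\sum_n|\mathcal A_nc_n|<\infty$. After replacing $(\alpha_n,\mathcal A_n,\eta_n)$ by $(-\alpha_n,-\mathcal A_n,-\eta_n)$ if necessary, I assume $\mathcal A_n\ge0$ for all $n$. Then $\mathcal A_nc_n$ has the sign of $c_n$ whenever it is nonzero. I would split into two cases according to whether $c_n$ eventually keeps one sign, and handle both through a single normalization of the linear recurrence.

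\emph{Normalization.} Put $\mu_0=1$ and $\mu_n=\prod_{j<n}M_j>0$, and set $\hat c_n=c_n/\mu_n$. Dividing the recurrence by $\mu_{n+1}$ gives
\[
\hat c_{n+1}=\hat c_n+\frac{\xi_n}{\mu_{n+1}}.
\]
Because $M_n\to1$, the products $\mu_n$ are subexponential. Concretely, fix $\theta<\theta'<1$ and choose $n_1$ so that $M_j\in[\theta/\theta',\theta'/\theta]$ for all $j\ge n_1$. Then for $n_1\le n\le m$,
\[
\frac{\mu_n}{\mu_{m+1}}\le\Bigl(\frac{\theta'}{\theta}\Bigr)^{m+1-n}.
\]
Taking $n=n_1$, this bounds $1/\mu_{m+1}\le C(\theta'/\theta)^{m}$, so $|\xi_m|/\mu_{m+1}\le C\theta'^m$. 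The increments of $\hat c_n$ are therefore summable, and $\hat c_n\to\hat c_\infty$ for some real number $\hat c_\infty$. Since $\mu_n>0$, $c_n$ and $\hat c_n$ always have the same sign.

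\emph{Case $\hat c_\infty\neq0$.} Then $c_n$ has the fixed sign of $\hat c_\infty$ for $n\ge n_2$, and hence so does $\mathcal A_nc_n$. Consequently
\[
\sum_{n=n_2}^{N}|\mathcal A_nc_n|
=\Bigl|\sum_{n=n_2}^{N}\mathcal A_nc_n\Bigr|
=\Bigl|\alpha_{N+1}-\alpha_{n_2}-\sum_{n=n_2}^{N}\eta_n\Bigr|.
\]
The right side is bounded uniformly in $N$, because the $\alpha_n$ lie in a compact interval and $\sum_n|\eta_n|<\infty$. This case is where the compactness hypothesis and the one-signedness of $\mathcal A_n$ are used.

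\emph{Case $\hat c_\infty=0$.} This covers every orbit on which $c_n$ changes sign infinitely often. Writing $\hat c_n=-\sum_{m\ge n}\xi_m/\mu_{m+1}$ and multiplying by $\mu_n$ gives, for $n\ge n_1$,
\[
|c_n|\le\sum_{m\ge n}\frac{\mu_n}{\mu_{m+1}}|\xi_m|
\le C\sum_{m\ge n}\Bigl(\frac{\theta'}{\theta}\Bigr)^{m+1-n}\theta^m
=C'\theta'^n.
\]
Thus $c_n$ decays geometrically, and $\sum_n|\mathcal A_nc_n|\le C\sum_n|c_n|<\infty$ directly, without needing the sign of $\mathcal A_n$.

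The main obstacle is this second case. Without the normalization, one would have to handle infinitely many alternating stretches of fixed sign, each of which contributes only a bounded amount, and such contributions need not be summable. The device $\hat c_n=c_n/\mu_n$ removes the difficulty: it converts the requirement that $c_n$ return to zero infinitely often into the statement $\hat c_\infty=0$. That statement, together with the subexponential growth of $\mu_n$, forces geometric decay of $c_n$. The only care required is choosing $\theta'$ strictly between $\theta$ and $1$ so that the ratios $\mu_n/\mu_{m+1}$ are absorbed.

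Combining the two cases gives $\sum_n|\alpha_{n+1}-\alpha_n|\le\sum_n|\mathcal A_nc_n|+\sum_n|\eta_n|<\infty$. Hence $(\alpha_n)$ is Cauchy and converges.
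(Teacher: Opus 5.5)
Your proof is correct, but it handles the oscillating case differently from the paper. Your first case and the paper's first case are the same: there the partial sums of $\mathcal A_nc_n$ are monotone and bounded.

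The paper splits directly on the sign behavior of $c_n$. When $c_n$ is not eventually one-signed, it takes arbitrarily late crossings $s$ with $c_sc_{s+1}\le0$. Positivity of $M_s$ then forces $M_s|c_s|+|c_{s+1}|=|\xi_s|$, so $c_s$ is already of size $\theta^s$. The paper then iterates the recurrence backward from $s$, using only $M_n\ge m_0\in(\theta,1)$, and lets $s\to\infty$ through crossings to obtain $|c_n|\le C\theta^n$.

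Your integrating factor $\mu_n=\prod_{j<n}M_j$ replaces this crossing argument. Because $\hat c_n$ has summable increments, the dichotomy becomes $\hat c_\infty\neq0$ versus $\hat c_\infty=0$. In the second case the tail formula $c_n=-\sum_{m\ge n}(\mu_n/\mu_{m+1})\xi_m$ gives geometric decay at once: the boundary term vanishes exactly in the limit, rather than being made small at a crossing.

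What your route buys is a cleaner structural picture. Either $c_n=O(\theta^n)$, or $c_n\sim\hat c_\infty\mu_n$ follows the unperturbed linear dynamics with a fixed sign. One-signed orbits with $\hat c_\infty=0$ are covered by the same estimate. The paper's route avoids infinite products and stays local at each crossing.

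Two minor remarks. First, both arguments use only an eventual lower bound $M_n\ge m_0>\theta$; your upper bound $M_j\le\theta'/\theta$ is never used. Second, your final sum is not literally equal to $C'\theta'^n$. It equals $\frac{\theta'}{\theta(1-\theta')}C\theta^n$, which is slightly better than the bound you state and still implies it.
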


\begin{proof}
If $c_n$ is eventually nonnegative or eventually nonpositive, then $\mathcal A_nc_n$ is eventually nonnegative or eventually nonpositive. Choose $n_0$ after these sign conditions hold. Summing the second relation in~\eqref{eq:perturbed-system} from $n_0$ to $m$ gives $\sum_{n=n_0}^{m}\mathcal A_nc_n=\alpha_{m+1}-\alpha_{n_0}-\sum_{n=n_0}^{m}\eta_n$. The right side is bounded, while the partial sums on the left are monotone. Hence $\sum_{n\ge n_0}|\mathcal A_nc_n|<\infty$, and the second relation in~\eqref{eq:perturbed-system} gives finite total variation.

Otherwise there are arbitrarily late crossings $s$ with $c_sc_{s+1}\le0$. If $M_s\ge m_0>0$, then $m_0|c_s|+|c_{s+1}|\le|\xi_s|$, and hence
$$
|c_s|+|c_{s+1}|\le C\theta^s.
$$
Choose $m_0$ with $\theta<m_0<1$. For large $n$, $M_n\ge m_0$. Solving the first relation in~\eqref{eq:perturbed-system} backward from a crossing $s>n$ gives
$$
|c_n|
\le C m_0^{-(s-n)}\theta^s
+C\sum_{j=n}^{s-1}m_0^{-(j-n+1)}\theta^j
\le C'\theta^n.
$$
Letting $s\to\infty$ through crossings proves this estimate for all sufficiently large $n$. Thus $\sum_n|c_n|<\infty$, and the second relation in~\eqref{eq:perturbed-system} again gives finite total variation.
\end{proof}

At an endpoint one frame weight vanishes, so estimates based on division by individual weights are not uniform. The argument below instead uses moment determinants, energies, and total frame mass, which remain regular because every endpoint retains three active nodes. It also supplies a retraction whose two-block defect is controlled by the external mass.

Set $\mathcal G:=T^2$ so that the retracted dynamics return to the same segment of the two-cycle family.

\begin{lemma}
\label{lem:endpoint-chart}
There exist a relative neighborhood $\mathcal N$ of the compact two-cycle family $\cC$ in the probability simplex and a constant $\nu>0$ such that
$$
\det\Gamma(w)\geq\nu,\qquad H(w)\geq\nu,\qquad
\det\Gamma(Tw)\geq\nu,\qquad H(Tw)\geq\nu
$$
for every $w\in\mathcal N$. The maps $P,H,T$, and $\mathcal G=T^2$ admit $C^1$ extensions to an open neighborhood of $\overline{\mathcal N}$ in the affine hyperplane $\sum_iw_i=1$, with uniformly bounded first derivatives there. The scalar coordinates $\alpha$ and $c$, initially defined on $\mathcal U_E\subset\mathcal S_E$, where $c(u):=\coeff{3}{(P_uP_{Tu}-R)}$, admit $C^1$ extensions to the same neighborhood, including through the three-node endpoints.
\end{lemma}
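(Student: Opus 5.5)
The plan is a compactness-and-continuity argument built on the moment formulation. It never divides by an individual weight, so it works uniformly up to the three-node endpoints of $\cC$.

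\emph{Step 1: positivity on $\cC$.} By \cref{lem:edge-family}, every $u\in\cC$ has at least three positive coordinates, and so does $Tu\in\cC$. Hence $\det\Gamma(u)>0$ and $\det\Gamma(Tu)>0$. Moreover $H(u)=h>0$ and $H(Tu)=h$. Since $\cC$ is compact (two closed segments), all four quantities are bounded below on $\cC$ by some $2\nu>0$.

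\emph{Step 2: extension to the affine hyperplane.} I define $\Gamma(w)$ and the right-hand moment vector by the same polynomial formulas for every $w$ with $\sum_iw_i=1$, nonnegative or not. These moments are linear in $w$. On the open set where $\det\Gamma(w)>0$, Cramer's rule makes the coefficients of $P_w$ rational functions of $w$ with nonvanishing denominator. Then $H(w)=\sum_iw_iP_w(\lambda_i)^2$ is rational as well. Where $H(w)>0$, the formula $(Tw)_i=w_iP_w(\lambda_i)^2/H(w)$ is rational, and it still sums to one because $\sum_iw_iP_w(\lambda_i)^2=H(w)$. So $T$ maps into the same hyperplane, and $\det\Gamma(Tw)$ and $H(Tw)$ are compositions of these rational maps. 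All four functions are therefore continuous, in fact real-analytic, on the open set $\mathcal O$ where $\det\Gamma(w)>0$, $H(w)>0$, $\det\Gamma(Tw)>0$ and $H(Tw)>0$. Since they exceed $2\nu$ on the compact set $\cC$, uniform continuity gives an open $\delta$-neighborhood $\mathcal V$ of $\cC$ in the hyperplane on which all four exceed $3\nu/2$. I take $\mathcal N$ to be the $\delta/2$-neighborhood intersected with the simplex. Then $\overline{\mathcal N}$ is a compact subset of $\mathcal V$, and on $\mathcal V$ the maps $P$, $H$, $T$ and $\mathcal G=T^2$ are $C^1$. To get bounded first derivatives I would shrink to the $3\delta/4$-neighborhood, whose closure is compact inside $\mathcal V$; the derivatives are continuous there and hence bounded. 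For $\mathcal G$ I also need $T$ to map this neighborhood into a region where $T$ is again $C^1$. This holds because $T(\cC)=\cC$ and $T$ is continuous, so after shrinking $\delta$, $T$ maps the neighborhood into $\mathcal V$.

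\emph{Step 3: the scalars $\alpha$ and $c$.} For $\alpha$ I use $\widetilde\alpha$ from \eqref{eq:alpha-rational}. It is a rational function of $w$ whose denominator $H(w)\ge\nu$, so it is $C^1$ on $\mathcal V$, and it equals $\alpha$ on $\mathcal U_E$ (where it is defined) by the identity proved there. For $c$, the formula $c(w)=\coeff{3}{(P_wP_{Tw}-R)}$ is a polynomial in the coefficients of $P_w$ and $P_{Tw}$. It is therefore $C^1$ wherever $P$ and $P\circ T$ are, and it agrees with the earlier definition on $\mathcal U_E$. Neither formula divides by a single weight, so the extensions pass through the three-node endpoints, where only one weight vanishes and the determinants stay bounded below by Step 1.

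\emph{Main obstacle.} The difficulty is bookkeeping rather than analysis. Off the simplex, $T$ may produce negative entries, so I must avoid any step that uses nonnegativity. In particular I must not use positive definiteness of $\Gamma$ in the form $z^\top\Gamma z=\sum_iw_i(u+v\lambda_i)^2$, since this fails when some $w_i<0$. That is exactly why the definitions use the determinant condition and explicit rational formulas. I also need to nest the neighborhoods so that $T$ maps $\overline{\mathcal N}$ into the domain of regularity of $T$, which makes $\mathcal G$ well defined. This follows from compactness and invariance of $\cC$ under $T$.
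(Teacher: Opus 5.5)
Your proposal is correct and follows essentially the same route as the paper. Both arguments establish positivity of $\det\Gamma$ and $H$ at $w$ and at $Tw$ on the compact $T$-invariant family $\cC$, then take a compactness neighborhood on which the rational moment formulas for $P,H,T,\mathcal G$ have denominators bounded below, nest it so that $T$ maps $\overline{\mathcal N}$ back into the regularity region, and extend the scalars via $\widetilde\alpha$ from \eqref{eq:alpha-rational} and $c(w)=\coeff{3}{(P_wP_{Tw}-R)}$. The paper additionally shrinks the neighborhood so that $\mathcal V\cap\mathcal S_E\subset\mathcal U_E$; this is not part of the statement but is used later in \cref{prop:frame-convergence}, and one further reduction of your $\delta$ provides it.
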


\begin{proof}
Every point of $\cC$ has at least three positive weights on $E$. Consequently $\det\Gamma(w)>0$ and $H(w)>0$ on $\cC$. Since $T\cC=\cC$, the same is true of $\det\Gamma(Tw)$ and $H(Tw)$. Compactness and continuity therefore give a relative open neighborhood $\mathcal V$ of $\cC$ and a constant $\nu>0$ such that
$$
\det\Gamma(w)\geq\nu,\qquad H(w)\geq\nu,\qquad
\det\Gamma(Tw)\geq\nu,\qquad H(Tw)\geq\nu
$$
throughout $\mathcal V$. We also choose $\mathcal V$ so that $\mathcal V\cap\mathcal S_E\subset\mathcal U_E$, where $\mathcal U_E$ is the relative neighborhood supplied by \cref{lem:factor-sharing}. These quantities are precisely the denominators in the rational formulas for the maps under consideration, so $P,H,T$, and $\mathcal G$ are $C^1$ throughout $\mathcal V$.

By~\eqref{eq:alpha-rational}, $\alpha$ agrees with the $C^1$ function $\widetilde\alpha$ on $\mathcal V\cap\mathcal S_E$. Together with $\widetilde c(w)=\coeff{3}{(P_wP_{Tw}-R)}$, this provides the asserted $C^1$ extensions of $\alpha$ and $c$ through both three-node endpoints. We retain the notation $\alpha,c$ for these extensions.

Choose a relative neighborhood $\mathcal N$ of $\cC$ whose closure is contained in $\mathcal V$. By continuity of $T$ and $\mathcal G$ and the invariance
$T\cC=\mathcal G\cC=\cC$, we may also require
$$
T(\overline{\mathcal N})\cup
\mathcal G(\overline{\mathcal N})\subset\mathcal V.
$$
The denominator bounds and rational formulas provide $C^1$ extensions to an open neighborhood of $\overline{\mathcal N}$ in the affine hyperplane $\sum_iw_i=1$. The asserted derivative bounds then follow from compactness of $\overline{\mathcal N}$.
\end{proof}

We record two consequences. First, since $U=0$ on $\cC$ and $\mathcal G\cC=\cC$, after shrinking $\mathcal N$ and $\nu$ if necessary,
$$
1-U(w)\geq\nu,
\qquad
1-U(\mathcal G(w))\geq\nu
\qquad(w\in\mathcal N).
$$
For any probability vector $w$ with $U(w)<1$, define the retraction $\retE$ onto $\mathcal S_E$ by
$$
(\retE w)_i=
\begin{cases}
w_i/(1-U(w)),&\lambda_i\in E,\\
0,&\lambda_i\notin E.
\end{cases}
$$
This operation removes the weights outside $E$ and renormalizes the remaining weights. The preceding lower bounds ensure that both $\retE w$ and $\retE\mathcal G(w)$ are defined for $w\in\mathcal N$. After shrinking $\mathcal N$ once more, there is $C<\infty$ such that, for every $w\in\mathcal N$,
\begin{equation}\label{eq:retraction}
\norm{w-\retE w}_1=2U(w),\qquad
\norm{\retE\mathcal G(w)-\mathcal G(\retE w)}_1
\leq C U(w).
\end{equation}

Fix $w\in\mathcal N$ and put $v=\retE w$. For $\lambda_i\in E$,
$$
v_i-w_i=\frac{U(w)}{1-U(w)}w_i,
$$
whereas $v_i-w_i=-w_i$ for $\lambda_i\notin E$. Hence
$$
\sum_{\lambda_i\in E}|v_i-w_i|
=\frac{U(w)}{1-U(w)}\sum_{\lambda_i\in E}w_i
=U(w),
\qquad
\sum_{\lambda_i\notin E}|v_i-w_i|=U(w),
$$
and therefore $\norm{w-\retE w}_1=2U(w)$.

For the second estimate, note that every state in $\cC$ is supported on $E$, so $U(w)\leq\dist_1(w,\cC)$. If $w_t=(1-t)w+tv$ with $0\leq t\leq1$, then
$$
\dist_1(w_t,\cC)
\leq\dist_1(w,\cC)+\norm{w_t-w}_1
\leq3\dist_1(w,\cC).
$$
After shrinking $\mathcal N$, every such segment and its image under $\mathcal G$ lie in a fixed compact neighborhood of $\cC$ on which the chart maps are $C^1$ and the total $E$-mass is bounded away from zero. Consequently $\Psi=\retE\circ\mathcal G$ has a uniformly bounded derivative along the segment. Moreover, $\mathcal G$ preserves $\mathcal S_E$, so $\retE\mathcal G(v)=\mathcal G(v)$. The mean-value theorem gives
$$
\begin{aligned}
\norm{\retE\mathcal G(w)-\mathcal G(\retE w)}_1
&=\norm{\Psi(w)-\Psi(v)}_1\\
&\leq C\norm{w-v}_1
=2CU(w).
\end{aligned}
$$
This proves~\eqref{eq:retraction} uniformly through both endpoints.

A second consequence is a common positivity gap. After shrinking $\mathcal N$ once more, every $u\in\mathcal N\cap\mathcal S_E$ satisfies
\begin{equation}
\alpha(u)\in I,
\qquad
\alpha(Tu)\in I.
\label{eq:common-gap}
\end{equation}
Indeed, since $\Pe(e_i)\Po(e_i)=h>0$ for every $e_i\in E$, the node values of the two limiting factors are nonzero and have the same sign pattern. By continuity, $P_u(e_i)$ and $P_{Tu}(e_i)$ retain this pattern near $\cC$. The nonnegativity conditions in~\eqref{eq:four-bary} therefore place both parameters in the same closed gap $I$, including its endpoints. Consequently $R(\alpha(u))$ and $R(\alpha(Tu))$ are either both nonnegative or both nonpositive.

Since $\dist(w_{2n},\Ce)\to0$, the even iterates eventually enter this neighborhood. The endpoint retraction estimate~\eqref{eq:retraction} and the geometric estimate~\eqref{eq:external-decay} make the retracted sequence $(\retE w_{2n})$, whose terms lie in $\mathcal S_E$, an exponentially accurate pseudo-orbit of $\mathcal G=T^2$.

\begin{proposition}\label{prop:frame-convergence}
In the four-node case, both the even and odd subsequences of $w_k$ converge.
\end{proposition}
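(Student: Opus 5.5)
The plan is to apply \cref{lem:perturbed-orientation} to the retracted even sequence $u_n:=\retE w_{2n}\in\mathcal S_E$. Convergence of $\alpha(u_n)$ will then give convergence of $w_{2n}$. The odd subsequence follows by continuity of $T$, since $w_{2n+1}=Tw_{2n}$.

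\textbf{Setting up the scalar recursion.} Choose $n_1$ so that $w_{2n}\in\mathcal N$ for all $n\ge n_1$, and put $\alpha_n:=\alpha(u_n)$ and $c_n:=c(u_n)$, using the $C^1$ extensions from \cref{lem:endpoint-chart}. By~\eqref{eq:retraction} and~\eqref{eq:external-decay}, $\|u_{n+1}-\mathcal G(u_n)\|_1\le\|\retE\mathcal G(w_{2n})-\mathcal G(\retE w_{2n})\|_1\le CU(w_{2n})\le C\theta^{2n}$.

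For a state $u\in\mathcal U_E$, applying \cref{lem:face-equations} and \cref{lem:factor-sharing} to the exact two-step orbit $u,Tu,\mathcal G u$ gives exact relations. Writing $u'=Tu$, they are
\[
c(\mathcal G u)=M(u)\,c(u),\qquad
M(u):=\frac{L(\alpha(u),P_{u'})\,L(\alpha(u'),P_{u'})^{-1}\,c(u')}{c(u)}.
\]
It is cleaner to compose the one-step relations directly. One step gives $c(u')=\frac{L(\alpha(u),P_{u'})}{L(\alpha(u'),P_{u'})}c(u)$, and a second step gives the analogous relation from $u'$ to $\mathcal G u$. The multiplier $M(u)$ is therefore the product of two ratios. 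Each ratio is positive, $C^1$ and bounded on $\mathcal U_E$ by \cref{lem:factor-sharing}, and each equals $1$ on $\cC$ by~\eqref{eq:L-edge}. Likewise, two applications of~\eqref{eq:face-motion} give $\alpha(\mathcal G u)-\alpha(u)=\mathcal A(u)c(u)$. Here $\mathcal A(u)=\frac{R(\alpha(u))}{H(u')}+\frac{R(\alpha(u'))}{H(\mathcal G u)}\frac{c(u')}{c(u)}$, which is a bounded function.

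By~\eqref{eq:common-gap}, both $R(\alpha(u))$ and $R(\alpha(u'))$ have the fixed sign of $R$ on $I$. The ratio $c(u')/c(u)$ is positive, so $\mathcal A$ has one sign throughout.

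\textbf{Transferring to the pseudo-orbit.} We set $M_n:=M(u_n)$, $\mathcal A_n:=\mathcal A(u_n)$, $\xi_n:=c(u_{n+1})-c(\mathcal G u_n)$ and $\eta_n:=\alpha(u_{n+1})-\alpha(\mathcal G u_n)$. Since $c$ and $\alpha$ are uniformly $C^1$ near $\cC$, we get $|\xi_n|+|\eta_n|\le C\|u_{n+1}-\mathcal Gu_n\|_1\le C\theta^{2n}$. Because $\dist(u_n,\Ce)\to0$ and $M=1$ on $\cC$, we also get $M_n\to1$. The $\alpha_n$ lie in a compact interval. \Cref{lem:perturbed-orientation} then yields convergence of $\alpha_n$ to some $\alpha_*\in I$.

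\textbf{Conclusion.} Every even $\omega$-limit point lies in $\Ce$, and $\alpha$ is continuous with $\alpha(w^{\mathrm e}(\alpha))=\alpha$. Also $u_n-w_{2n}\to0$. Hence every limit point of $w_{2n}$ equals $w^{\mathrm e}(\alpha_*)$, so $w_{2n}\to w^{\mathrm e}(\alpha_*)$. Continuity of $T$ then gives $w_{2n+1}\to w^{\mathrm o}(\alpha_*)$.

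\textbf{Main obstacle.} The delicate point is to define $M$ and $\mathcal A$ regularly near $\cC$, because the ratio $c(u')/c(u)$ is undefined where $c(u)=0$. I would avoid this ratio by using the positive $C^1$ transport factor $L(\alpha(u),P_{u'})/L(\alpha(u'),P_{u'})$ in its place. This relation comes from~\eqref{eq:orientation} and holds identically, including at $c=0$. With it, both $M$ and $\mathcal A$ are continuous functions of $u$ that are bounded away from degeneracy.

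A second check is that the identities from \cref{lem:face-equations} hold at the three-node endpoints. That lemma requires only three positive coordinates, which \cref{lem:factor-sharing} guarantees on $\mathcal U_E$. The identities extend by continuity through the $C^1$ charts.
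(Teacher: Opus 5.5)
Your proposal is correct and follows the paper's proof essentially step for step: the retracted even pseudo-orbit with error $O(U(w_{2n}))$, the multiplier $M(u)=m(Tu)\,m(u)$ from factor sharing, the fixed sign of $\mathcal A$ from~\eqref{eq:common-gap}, and then \cref{lem:perturbed-orientation}. Discard your first displayed formula for $M$, which reduces to $m(u)^2$, in favor of that product. Your two deviations are minor and valid. You get $M_n\to1$ from continuity of $M$ together with $M\equiv1$ on $\cC$ (since $\alpha(Tu)=\alpha(u)$ there), rather than via $c_n\to0$ and $|M(u)-1|\le C|c(u)|$. You also identify the limit from $\Omegaew\subset\Ce$ and $\alpha(w^{\mathrm e}(\beta))=\beta$, rather than from the barycentric formula.
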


\begin{proof}
Work first with the even subsequence and set $\widehat w_n=\retE w_{2n}$, $\alpha_n=\alpha(\widehat w_n)$, and $c_n=c(\widehat w_n)$.
We verify the scalar lemma for these retracted states. Factor sharing supplies the positive defect multiplier, the common positivity gap fixes the sign of the $\alpha$-increment, and convergence to the limiting quartic forces the multiplier to approach one.
After restricting to a sufficiently late tail and, if necessary, replacing $\theta$ by a larger number still less than one,~\eqref{eq:retraction}, $\widehat w_{n+1}=\retE\mathcal G(w_{2n})$, and~\eqref{eq:external-decay} imply
\begin{equation}\label{eq:pseudo}
\norm{\widehat w_{n+1}-\mathcal G(\widehat w_n)}_1\leq C\theta^n.
\end{equation}

Moreover,
$$
\dist_1(\widehat w_n,\Ce)
\leq \norm{\widehat w_n-w_{2n}}_1+\dist_1(w_{2n},\Ce)
\longrightarrow0.
$$
After discarding finitely many terms, therefore, $\widehat w_n\in\mathcal N$.

The retracted iterates are now an approximate orbit in $\mathcal S_E$, where the exact scalar identities apply. Here $c=0$ characterizes the exact family. For $u\in\mathcal N\cap\mathcal S_E$, \cref{lem:factor-sharing} gives
$$
c(Tu)=m(u)c(u),
\qquad
m(u):=\frac{L(\alpha(u),P_{Tu})}{L(\alpha(Tu),P_{Tu})}>0.
$$
Applying this identity twice yields
$$
c(\mathcal G(u))=M(u)c(u),
\qquad
M(u):=m(Tu)m(u)>0.
$$
Likewise, applying the second identity in~\eqref{eq:face-motion} twice yields $\alpha(\mathcal G(u))-\alpha(u)=\mathcal A(u)c(u)$, where
$$
\mathcal A(u)=\frac{R(\alpha(u))}{H(Tu)}
+m(u)\frac{R(\alpha(Tu))}{H(\mathcal G(u))}.
$$
Let $\sigma_R\in\{-1,1\}$ denote the sign of $R$ on $\operatorname{int}I$. By~\eqref{eq:common-gap},
$$
\sigma_RR(\alpha(u))\geq0,
\qquad
\sigma_RR(\alpha(Tu))\geq0.
$$
Since $m(u)>0$ and both energy denominators are positive,
$$
\sigma_R\mathcal A(u)
=\frac{\sigma_RR(\alpha(u))}{H(Tu)}
+m(u)\frac{\sigma_RR(\alpha(Tu))}{H(\mathcal G(u))}
\geq0.
$$
Thus every nonzero value of $\mathcal A(u)$ has the same fixed sign. The uniform lower bounds for the two energy denominators, together with the uniform bounds for $m$ and $R$ on the compact neighborhood, also give $|\mathcal A(u)|\leq C$.

Equation~\eqref{eq:retraction} gives $\norm{w_{2n}-\widehat w_n}_1=O(U(w_{2n}))\to0$. Since the quartic map $q(u)=P_uP_{Tu}$ is $C^1$ by \cref{lem:endpoint-chart} and $q(w_{2n})=q_{2n}$, we obtain $q(\widehat w_n)\to q_\infty$. Because $q_\infty-R=h$ is constant, its cubic coefficient is zero, and therefore $c_n=\coeff{3}{(q(\widehat w_n)-R)}\to0$.

It remains to show that the two-step defect multiplier becomes neutral. On the exact two-cycle family, $Tu$ has the same parameter as $u$, so $m(u)=1$. The next estimates show that the small defect $c(u)$ controls the departure from this identity.
For $u\in\mathcal N\cap\mathcal S_E$, the second identity in~\eqref{eq:face-motion} gives
$$
|\alpha(Tu)-\alpha(u)|
=\frac{|c(u)R(\alpha(u))|}{H(Tu)}
\leq C|c(u)|.
$$
Writing $P=P_{Tu}$ and using the definition of $m$,
$$
m(u)-1
=\frac{L(\alpha(u),P)-L(\alpha(Tu),P)}{L(\alpha(Tu),P)}.
$$
The coefficient formula~\eqref{eq:L-coeff}, the uniform bound for $\partial_\alpha L$, and the lower bound for $|L|$ therefore imply
$$
|m(u)-1|\leq C|\alpha(Tu)-\alpha(u)|
\leq C|c(u)|.
$$
The transport identity and boundedness of $m$ give
$$
|c(Tu)|=|m(u)c(u)|\leq C|c(u)|.
$$
Because $u,Tu$, and $\mathcal G(u)$ remain in the common regularity neighborhood, the same estimate at $Tu$ yields
$$
|m(Tu)-1|\leq C|c(Tu)|\leq C|c(u)|.
$$
Consequently,
$$
\begin{aligned}
|M(u)-1|
&=|m(Tu)m(u)-1|\\
&\leq |m(Tu)-1|\,|m(u)|+|m(u)-1|\\
&\leq C|c(u)|.
\end{aligned}
$$
Since $c(\widehat w_n)=c_n\to0$, this proves quantitatively that $M(\widehat w_n)\to1$.

Applying the uniformly $C^1$ coordinates $(\alpha,c)$ to the pseudo-orbit~\eqref{eq:pseudo} gives $c_{n+1}=M(\widehat w_n)c_n+\xi_n$ and $\alpha_{n+1}-\alpha_n=\mathcal A(\widehat w_n)c_n+\eta_n$, with $|\xi_n|+|\eta_n|\le C\theta^n$. The preceding estimates verify every hypothesis of \cref{lem:perturbed-orientation}, so $\alpha_n\to\alpha_\infty$.

Once $\alpha_n$ converges, the barycentric formula recovers each frame weight separately. Indeed, the $C^1$ endpoint bounds from \cref{lem:endpoint-chart}, the convergence~\eqref{eq:even-odd-P}, and $\norm{w_{2n}-\widehat w_n}_1\to0$ give $P_{\widehat w_n}\to\Pe$ and $H(\widehat w_n)\to h$. Together with $\alpha_n\to\alpha_\infty$, the identity $\widehat w_{n,e_i}P_{\widehat w_n}(e_i)=H(\widehat w_n)(e_i-\alpha_n)/R'(e_i)$ therefore gives a limit for every weight on $E$. The limiting factor $\Pe(e_i)$ is nonzero because $\Pe(e_i)\Po(e_i)=h>0$. Since the external mass tends to zero, the even weights $w_{2n}$ converge. Their limit has at least three active nodes, so $T$ is continuous there; from $w_{2n+1}=Tw_{2n}$, the odd weights also converge.
\end{proof}

\subsection{Recovery of signed directions and completion of the proof}

Passing to weights discarded the coordinate signs, so weight convergence is not yet directional convergence. The signed two-block ratio restores precisely this missing information.

\begin{proof}[of \cref{thm:main}]
If neither $\Omegaew$ nor $\Omegaow$ contains a four-node point, \cref{prop:three-node} gives convergence of the squared spectral coordinates along both the even and odd subsequences. Otherwise, \cref{prop:frame-convergence} gives the same conclusion.

Fix either the even or the odd subsequence, and let $k$ range over its indices. If $w_{k,i}$ has a positive limit, then $\lambda_i\in E$, so $q_\infty(\lambda_i)=h$, and $y_{k,i}$ was never annihilated. Hence~\eqref{eq:signed-ratio} gives $y_{k+2,i}/y_{k,i}=q_k(\lambda_i)/\sqrt{H_kH_{k+1}}\to1$, making its sign eventually constant. Coordinates with zero limiting weight satisfy $|y_{k,i}|=\sqrt{w_{k,i}}\to0$. Thus $y_k$ converges along the chosen subsequence, and the same argument applies to the other. The limiting supports contain at least three nodes and have energy $h>0$, so $F$ is continuous there and gives $y_{\mathrm o}=F(y_{\mathrm e})$ and $y_{\mathrm e}=F(y_{\mathrm o})$.
\end{proof}

\section{Discussion}

We conclude with a computational consequence and indicate which parts of the proof may extend beyond restart length two.

\subsection{A possible extrapolation}\label{sec:extrapolation}
Directional convergence yields a vector Aitken-type postprocessing step for restarted CG with $s=2$ in exact arithmetic. Let $e_k:=x_*-x^{[k]}=A^{-1}r_k$. Since both roots of every $P_k$ lie in $[\lambda_1,\lambda_N]$, we have $P_k(0)\geq\lambda_1^2$ and hence $\Pe(0),\Po(0)>0$. The residual recurrence and polynomial convergence give
$$
\frac{\|r_{k+2}\|}{\|r_k\|}
=\frac{\sqrt{H_kH_{k+1}}}{P_k(0)P_{k+1}(0)}
\longrightarrow
\varrho:=\frac{h}{\Pe(0)\Po(0)}>0.
$$
The same value applies to both parities. The norm-ratio limit and $y_{k+2}-y_k\to0$ give the residual relation; applying $A^{-1}$ and norm equivalence gives the error relation:
$$
r_{k+2}=\varrho r_k+o(\|r_k\|),
\qquad
e_{k+2}=\varrho e_k+o(\|e_k\|).
$$
Taking $A$-norms in the second relation and using norm equivalence and the two-block Kantorovich estimate gives $0<\varrho\leq((\kappa(A)-1)/(\kappa(A)+1))^2<1$.
Consequently,
$$
\widehat\varrho_k
:=\frac{\langle r_{k+2},r_k\rangle}{\|r_k\|^2}
\longrightarrow\varrho,
\qquad
\widehat x^{[k]}
:=\frac{x^{[k+2]}-\widehat\varrho_k x^{[k]}}
{1-\widehat\varrho_k}
$$
satisfies $\|x_*-\widehat x^{[k]}\|=o(\|e_k\|)$, so it cancels the leading error along each parity subsequence. We have not studied its behavior in finite-precision arithmetic.

\subsection{The polynomial mechanism beyond two steps}

For restart length $s$, $P_w$ becomes the monic degree-$s$ polynomial orthogonal to lower-degree polynomials. Support bounds, minimal-support cycles, invariant states, and parameter limits are known in various forms~\cite{Forsythe1968,PronzatoWynnZhigljavsky2009,Zhuk1982,Zhuk1995}. Under analogous compactness and two-cycle localization hypotheses, nontermination and the limiting degree-$2s$ identity give $s+1\leq|\supp y^*|\leq2s$. Uniform moment-Gram invertibility and two-block orthogonality then suggest an analogous coefficient estimate. Directional convergence still requires new control of localization, intermediate support sizes, and neutral transverse modes.

\subsection{The use of AI assistance}
The ideas and arguments in this proof were developed and tested with substantial assistance from ChatGPT (GPT-5.6 Sol Ultra). It repeatedly proposed plausible directions for the proof, helping us identify which questions merited attention and focus our effort on learning the specific mathematics required at each stage. Its ability to scrutinize long arguments, expose hidden assumptions, and test omitted cases had a profound effect on the project. Equally striking was its broad knowledge of the literature: by pointing us toward relevant results and connections, it allowed us to read much more selectively and deeply. It was an extremely fun process to learn about areas of mathematics from this process. ChatGPT also wrote the Python code used for the figure and helped polish the exposition. The authors independently checked the code and cited sources and verified every mathematical argument; they remain fully responsible for the manuscript. Our experience suggests that the profession is certainly changing: AI can already serve not only as a writing aid, but as a serious tool for mathematical exploration, verification, and focused learning.

\section*{Acknowledgments}
M. J. C. and A. T. thank the organizers of the 2025 Simons workshop at Berkeley, where they learned of the Forsythe conjecture. A. T. acknowledges support from the Defense Advanced Research Projects Agency through The Right Space (TRS) Disruption Opportunity 25 (DARPA-PA-24-04-07) and from NSF CAREER grant DMS-2045646. G. S. acknowledges support from the NSF Mathematical Sciences Postdoctoral Research Fellowship (MSPRF)
under award number 2402074.

\phantomsection
\bibliographystyle{siamplain}
\bibliography{forsythe_references}

\end{document}